\def\dj{d\kern-0.4em\char"16\kern-0.1em}
\def\Dj{\mbox{\raise0.3ex\hbox{-}\kern-0.4em D}}
\def\be{\begin{equation}}
\def\ee{\end{equation}}
\def\bena{\begin{eqnarray*}}
\def\ena{\end{eqnarray*}}
\def\t{\tau}
\def\s{\sigma}
\def\suml{\sum\limits}
\def\dss{\displaystyle}
\newcommand{\WF}{\operatorname{WF}}
 \def\D{\mathcal{D}}
 \def\E{\mathcal{E}}
 \def\Rd{\mathbf{R}^d}
 \def\Z{\mathbf{Z}_+}
\def\N{\mathbf{N}}
\def\lf {\lfloor}
\def\rf{\rfloor}
\newcommand{\sing}{\operatorname{singsupp}}
\newcommand{\supp}{\operatorname{supp}}
\numberwithin{equation}{section}
\newtheorem{te}{Theorem}[section]
\newtheorem{lema}{Lemma}[section]
\newtheorem{prop}{Proposition}[section]
\newtheorem{cor}{Corollary}[section]
\theoremstyle{definition}
\newtheorem{de}{Definition}[section]
\theoremstyle{remark}
\newtheorem{rem}{Remark}[section]
\begin{document}

%
%
%
%
%
%
%
%
%

\title{\textbf{Beyond Gevrey regularity}}

\author{Stevan Pilipovi\' c}

\address{Department of Mathematics and Informatics,
University of Novi Sad, Novi Sad, Serbia}

\email{stevan.pilipovic@dmi.uns.ac.rs}

\author{Nenad Teofanov}

\address{Department of Mathematics and Informatics,
University of Novi Sad, Novi Sad, Serbia}

\email{nenad.teofanov@dmi.uns.ac.rs}

\author{Filip Tomi\'c}

\address{Faculty of Technical Sciences,
University of Novi Sad, Novi Sad, Serbia}

\email{filip.tomic@uns.ac.rs}
\subjclass{Primary 35A18, 46F05; Secondary 46F10}

\keywords{Ultradifferentiable functions, Gevrey classes,
ultradistributions, wave-front sets}

\date{November, 2015}


\begin{abstract}
We define and study classes of smooth functions which are less
regular than Gevrey functions. To that end we introduce
two-parameter dependent sequences which do not satisfy Komatsu's
condition (M.2)', which implies stability under differential operators within the spaces of
ultradifferentiable functions.
Our classes therefore have particular behavior under the action of
differentiable operators. On a more advanced level, we study
microlocal properties and prove that
$$\WF_{0,\infty}(P(D)u)\subseteq \WF_{0,\infty}(u)\subseteq \WF_{0,\infty}(P(D)u) \cup {\rm Char}(P),$$
where $u$  is a Schwartz distribution, $P(D)$ is a  partial differential operator with constant coefficients
and $\WF_{0,\infty}$ is the wave front set described in terms of new regularity conditions.
For the analysis
we introduce particular admissibility condition for sequences of cut-off functions,
and a new technical tool called enumeration.
\end{abstract}

\maketitle
\section{Introduction}\label{sec-0}

\par

We propose new regularity conditions for smooth
functions which are weaker than the Gevrey regularity conditions.
Instead of the Gevrey sequence $ \{ p!^t \}_{p \in {\mathbf N}} $,
determined by parameter $ t>1$, we observe two-parameter
dependent sequences of the form $ \{ p^{\t p^\s} \}_{p \in {\mathbf N}} $,
with $ \t > 0 $ and $ \s > 1.$ When $\s = 1$ and $ \t > 1 $ we recapture the Gevrey
regularity as well as the analytic regularity for  $\s = 1$ and $ \t = 1 $.

Gevrey classes were initially introduced for the study of
regularity properties of the fundamental solution of the heat
operator, cf. \cite{Gevrey}, and thereafter used to describe regularities stronger than smoothness
and weaker than analyticity. In particular, it turned out that the well-posedness of the
Cauchy problem for weakly hyperbolic linear partial differential
equations (PDEs) can be characterized by the Gevrey index $t$, while
the same problem is ill-posed in the class of analytic functions,
cf. \cite{Chen, Rodino} and the references given there. Roughly speaking,
fundamental solution $\phi $ may have $C^{\infty}$-regularity property, which in this paper means that it
is smooth without restrictions to the growth of its derivatives,
$\E_t$-regularity (Gevrey regularity) if $\partial^{\alpha}\phi$  are bounded by $C^{\alpha+1}\alpha!^t$, $\alpha\in {\mathbf N}^d$,
for some $C>0$, $t>1$, and ${\mathcal A}$-regularity if
$\partial^{\alpha}\phi$ are bounded by $C^{\alpha+1}\alpha!$, $\alpha\in {\mathbf N}^d$,
for some $C>0$. Since there is a gap between the Gevrey and $C^{\infty}$-regularity,
new regularity conditions could be useful in local analysis of the solutions of PDEs, which is one motivation
for our work. In particular, our condition describes hypoellipticity property standing between $C^{\infty}$ hypoellipticity and
Gevrey hypoellipticity.

\par

Another motivation comes from microlocal analysis, where the notion of wave-front set plays a crucial role.
We recall that
\be
\label{wavefrontpodskup1}
\WF(u)\subseteq \WF_t(u) \subseteq \WF_A(u)\,, \;\;\; t>1,
\ee
where $u$ is a Schwartz distribution, $\WF$ is the classical $(C^{\infty})$ wave front set,
$\WF_t$ is the Gevrey wave front set, and $\WF_A$ is analytic wave front set,
we refer to Subsection \ref{subsec01} for precise definitions,
and to \cite{ Foland, HermanderKnjiga} for details.
We note that one can find examples of (ultra)distributions for which
the inclusions in \eqref{wavefrontpodskup1} are strict,
and the same holds for other inclusions of wave front sets in this paper.
Extension of \eqref{wavefrontpodskup1}
to Gevrey type ultradistributions is given in
\cite{Rodino} and "stronger" singularities related to $t<1$ are recently treated in
\cite{Pilipovic-Toft}.

Apart from the Gevrey wave front set,
different types of wave front sets that modify the classical wave front set
are introduced in the literature in connection to the equation under investigation, and
we do not intend to survey the definitions here. However, let us briefly mention the Gabor wave front set,
originally defined in \cite{HermanderRad} and further developed in
\cite{RodinoWahlberg}, which is based on microlocal analysis on cones taken with respect to
the whole of the phase space variables. Such approach is recently successfully
applied to the study of Schr\"odinger equations
in \cite{CW, CNR-1, CNR-2, P-SRW, Wahlberg}, see also the references therein.
Note that the Gabor wave front set of a tempered distribution is characterized in terms of rapid decay of its
Gabor coefficients on appropriate set. The idea to use Gabor coefficients and, consequently,
methods of time-frequency analysis and modulation spaces
in the study of wave front sets is introduced in \cite{JPTT, PTToft-01, PTToft-02}, and extended in \cite{CJT-1, CJT-2}
to more general Banach and Fr\'echet spaces.  We refer to \cite{F1,FG1, FS1, FS2} for
details on modulation spaces and their role in time-frequency analysis.
Since versions of Gabor wave front set can be adapted to analytic and Gevrey regularity
(cf. \cite{CS, SW-1, SW-2}) it is natural to
assume that the same holds in the framework of regularity proposed in
this paper, which will be considered by the authors in a separate contribution.

\par

Our approach gives a possibility to define
wave-front sets which detect
singularities that are "stronger" then the classical $C^{\infty}$
singularities and at the same time "weaker" than any  Gevrey type
singularities, and to show that the usual properties (such
as pseudo-local property), valid for wave-front sets quoted in (\ref{wavefrontpodskup1}), hold
also in the context of our new regularity conditions.
More precisely, one of the main results of the paper is the following
(see Section \ref{sec-2} for the definition of $\WF_{\{\t,\s\}}(u)$).

\begin{te} \label{glavnateorema}
Let $\t>0$, $\s>1$, and $u\in \D'(U)$.
Then
\begin{multline}
\label{FundamentalEstimate}
\WF_{\{2^{\s-1}\t,\s\}}(P(D)u)\subseteq \WF_{\{2^{\s-1}\t,\s\}}(u)\\[1ex]
\subseteq
\WF_{\{\t,\s\}}(P(D)u) \cup {\rm Char}(P(D)),
\end{multline}
where $P(D)$ is a partial differential operator of order $m$ with constant coefficients
and $ {\rm Char}(P(D))$ is its characteristic set.
\end{te}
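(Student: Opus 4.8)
The plan is to prove the two inclusions in \eqref{FundamentalEstimate} separately, after rewriting the condition $(x_0,\xi_0)\notin\WF_{\{\t,\s\}}(v)$ from Section~\ref{sec-2} in its Fourier-analytic form: there is an admissible sequence of cut-off functions localizing near $x_0$ together with a conic neighbourhood $\Gamma$ of $\xi_0$ on which the localized Fourier transforms decay at the rate governed by the weight $\{p^{\t p^\s}\}$. I would work throughout with the contrapositive formulations, so that each inclusion becomes a transfer of microregularity from the input to the output.

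For the left inclusion I would establish the loss-free pseudo-local estimate, namely that $(x_0,\xi_0)\notin\WF_{\{2^{\s-1}\t,\s\}}(u)$ implies $(x_0,\xi_0)\notin\WF_{\{2^{\s-1}\t,\s\}}(P(D)u)$. Writing $P(D)=\sum_{|\alpha|\le m}c_\alpha D^\alpha$ and expanding by the Leibniz rule, $\chi_N P(D)u=P(D)(\chi_N u)-R_N$, where $R_N=\sum_{|\alpha|\le m}c_\alpha\sum_{0<\beta\le\alpha}\binom{\alpha}{\beta}D^\beta\chi_N\,D^{\alpha-\beta}u$ collects the commutator terms, the Fourier transform of the leading term equals $P(\xi)\widehat{\chi_N u}(\xi)$; since $|P(\xi)|\le C(1+|\xi|)^m$, the polynomial factor is harmless against the super-polynomial decay and the weight is unchanged. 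The terms in $R_N$ carry only finitely many ($\le m$) derivatives of the cut-offs and derivatives of $u$ of order $\le m-1$, which on the Fourier side are again polynomial factors $\xi^\gamma$ absorbed by the admissibility condition on the cut-off sequence. Hence no shift of $\t$ occurs; the inclusion holds at any level, and I would phrase it at $2^{\s-1}\t$ only so that its right-hand side matches the middle term of the chain.

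The right inclusion is the substantial part and amounts to microlocal hypoellipticity with loss: assuming $(x_0,\xi_0)\notin{\rm Char}(P(D))$ and $(x_0,\xi_0)\notin\WF_{\{\t,\s\}}(P(D)u)$, I would build a microlocal parametrix. Non-vanishing of the principal symbol yields $|P(\xi)|\ge c|\xi|^m$ on a conic neighbourhood $\Gamma$ of $\xi_0$ for large $|\xi|$, so for a conic cut-off $a(\xi)$ supported in $\Gamma$ and equal to $1$ near the ray through $\xi_0$ the symbol $a(\xi)P(\xi)^{-1}$ has order $-m$. Inverting $\widehat{\psi P(D)u}(\xi)=P(\xi)\widehat{\psi u}(\xi)-\widehat{R_\psi u}(\xi)$, where $R_\psi u$ again collects the localization commutators, gives $a(\xi)\widehat{\psi u}=a(\xi)P(\xi)^{-1}\widehat{\psi P(D)u}+a(\xi)P(\xi)^{-1}\widehat{R_\psi u}$. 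The first term inherits, and even improves by $m$ orders, the $\{\t,\s\}$-decay of $P(D)u$, so the entire difficulty is concentrated in the error term, which still contains $u$ through its lower-order derivatives localized by derivatives of $\psi$.

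Controlling this error is the main obstacle, and it is precisely here that the loss $\t\mapsto 2^{\s-1}\t$ is produced. Because the weight $\{p^{\t p^\s}\}$ violates Komatsu's condition $(M.2)'$, the standard symbol calculus and associated-function estimates are unavailable; instead I would iterate the parametrix with nested admissible cut-off sequences and organize the multi-index bookkeeping by the enumeration device. Each time a derivative of total order $|\gamma|$ is split between a cut-off and $u$, bounding the weight $|\gamma|^{\t|\gamma|^\s}$ forces the convexity inequality $(|\gamma_1|+|\gamma_2|)^\s\le 2^{\s-1}(|\gamma_1|^\s+|\gamma_2|^\s)$, which replaces the governing exponent $|\gamma|^\s$ by $2^{\s-1}|\gamma|^\s$ and thereby the weight $p^{\t p^\s}$ by $p^{2^{\s-1}\t p^\s}$. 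The delicate point is to show that the enumeration keeps these contributions summable and that the remainders are genuinely of lower order, so that the bootstrap closes; granting this, the error term is $\{2^{\s-1}\t,\s\}$-microregular, and together with the leading term this yields $(x_0,\xi_0)\notin\WF_{\{2^{\s-1}\t,\s\}}(u)$, completing the proof.
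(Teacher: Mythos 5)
There is a genuine gap, in fact two. First, your argument for the left inclusion does not work as stated. Writing $\chi_N P(D)u=P(D)(\chi_N u)-R_N$ leaves you with commutator terms $D^{\beta}\chi_N\,D^{\alpha-\beta}u$, $\beta\neq 0$, which contain the \emph{unlocalized} distribution $u$ on the collar where $\chi_N$ varies. The hypothesis $(x_0,\xi_0)\notin\WF_{\{2^{\s-1}\t,\s\}}(u)$ controls only the Fourier transforms of the given bounded sequence $u_N$ on $\Gamma$; it says nothing about $u$ microlocally over $\supp D^{\beta}\chi_N$, where $u$ may carry arbitrary singularities in the direction $\xi_0$. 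Paley--Wiener then gives only polynomial growth for $\widehat{R_N}$ on $\Gamma$, which cannot be ``absorbed by the admissibility condition'' --- admissibility bounds derivatives of the cut-offs, not the directional decay of $u$ off $\Omega$. The paper avoids commutators altogether: in Lemma \ref{zatvorenostWfizvodi} one takes the localizing sequence for $\partial_j u$ to be $\partial_j u_{N+1}$ (an index-shifted application of the derivative to the \emph{already localized} sequence), so that $\widehat{\partial_j u_{N+1}}=i\xi_j\widehat{u_{N+1}}$, and the extra factor $|\xi|$ is absorbed by the shift $N\mapsto N+1$ together with $\widetilde{(M.2)'}$; iterating over the monomials of $P(D)$ gives the first inclusion with no loss in $\t$. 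Note also that for fixed $N$ the decay is only $|\xi|^{-\lf N^{1/\s}\rf}$, so even the benign factor $|P(\xi)|\leq C(1+|\xi|)^m$ genuinely requires this index shift, not just the remark that it is ``harmless''.

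Second, for the right inclusion you correctly identify the mechanism (a Neumann-type inversion of $P$ microlocally, with the loss $\t\mapsto 2^{\s-1}\t$ traced to the convexity inequality $(p+q)^{\s}\leq 2^{\s-1}(p^{\s}+q^{\s})$ underlying $\widetilde{(M.2)}$), but you explicitly defer the decisive step: ``the delicate point is to show that the enumeration keeps these contributions summable and that the remainders are genuinely of lower order, so that the bootstrap closes; granting this\dots''. That granted step \emph{is} the theorem. In the paper it occupies all of Section \ref{sec-3}: one fixes a single $\tilde\t,\s$-admissible sequence with the specific auxiliary exponent $\tilde\t=\t^{\s/(\s-1)}$, sets $u_N=\chi_{2^{\s}N}u$ (the dilation $N\mapsto 2^{\s}N$ in the cut-off index is essential), and replaces your open-ended ``iteration with nested cut-offs'' by a \emph{finite} truncated series $w_N$ as in \eqref{ParcijalnaSuma}, truncated at $\mathfrak{S}\leq\lf(N/\tilde\t)^{1/\s}\rf-m$, with explicit error $e_N$ as in \eqref{errorterm} containing at most $4\cdot 2^{\lf(N/\tilde\t)^{1/\s}\rf}$ terms. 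Closing the estimate then requires the derivative bounds \eqref{OcenaProizvodaOp} on $R_1^{a_1}\cdots R_m^{a_m}\chi_{2^{\s}N}$, the frequency splitting into $I_1$ ($|\eta|<\varepsilon|\xi|$, using Lemma \ref{Singsuplema} to get decay of $\widehat{f_{N'}}$ on a slightly larger cone $V\supset\overline\Gamma$, with the shifted index $N'=N-\lceil 2^{\s-1}\tilde\t(M+d+1)^{\s}\rceil$) and $I_2$ (using only polynomial bounds on $\widehat{f_{N'}}$ against the strong $\eta$-decay of $\widehat{w_N}$), and several chained enumerations. Since the defining sequence violates $(M.2)'$, none of this follows from standard symbol-calculus facts, and your sketch neither supplies these estimates nor offers an alternative route; as written the proposal assumes precisely what must be proved.
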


In fact,  the result of Theorem \ref{glavnateorema} holds true when
$ P(D)=\sum_{|\alpha|\leq m} a_{\alpha} (x) D^{\alpha}$, where
$ a_{\alpha} (x) \in \E_{\{\t,\s\}} (\Rd)$ (see Section \ref{sec-1} for the definition).
This extension requires nontrivial modifications of the proof of Theorem \ref{glavnateorema}
and will be given in another paper.
In particular, to handle the approximate solution (see Section \ref{sec-3})
one should prove and use inverse closedness property of the corresponding algebra,
cf. \cite{K}.

We refer to \eqref{characteristic set} for the definition of  $  {\rm Char}(P(D)) $ and recall that
if $  {\rm Char}(P(D)) = \emptyset $ then   $P(D)$ is called hypoelliptic.

In particular, with
$
\WF_{0,\infty}(u)=\bigcup_{\s>1}\bigcap_{\t>0}\WF_{\{\t,\s\}}(u)$
we have:
\begin{cor}
\label{FundamentalEstimateCor}
Let $u\in \D'(U)$ and $P(D)$ be a partial differential operator of order $m$ with constant coefficients.
Then
\be
\WF_{0,\infty}(P(D)u)\subseteq \WF_{0,\infty}(u)\subseteq \WF_{0,\infty}(P(D)u) \cup {\rm Char}(P(D)).
\ee
\end{cor}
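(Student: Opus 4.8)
The plan is to obtain Corollary \ref{FundamentalEstimateCor} from Theorem \ref{glavnateorema} by carrying out, directly on the chain of inclusions \eqref{FundamentalEstimate}, the two set-theoretic operations that enter the definition of $\WF_{0,\infty}$: first the intersection over $\t>0$, and then the union over $\s>1$. All the analytic content is already supplied by the main theorem, so the argument reduces to pure set algebra, provided one makes one observation about the index sets.

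First I would fix $\s>1$ and record that, since $2^{\s-1}$ is a positive constant, the map $\t\mapsto 2^{\s-1}\t$ is a bijection of $(0,\infty)$ onto itself. Consequently, for any distribution $v$,
\[
\bigcap_{\t>0}\WF_{\{2^{\s-1}\t,\s\}}(v)=\bigcap_{\t>0}\WF_{\{\t,\s\}}(v),
\]
the two intersections differing only by a relabelling of the index; I would denote this common set by $A_\s(v)$. This is the crucial point: the factor $2^{\s-1}$, which quantifies the loss between the outer and inner members of \eqref{FundamentalEstimate}, becomes invisible once one intersects over \emph{all} $\t>0$, so the gap in the main theorem collapses on the scale defining $\WF_{0,\infty}$.

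Next I would intersect the whole chain \eqref{FundamentalEstimate} over $\t>0$. By the displayed identity the two extreme members turn into $A_\s(P(D)u)$ and $A_\s(u)$, while for the last member I would invoke the elementary distributive law $\bigcap_\t(B_\t\cup C)=\bigl(\bigcap_\t B_\t\bigr)\cup C$, valid because $C={\rm Char}(P(D))$ does not depend on $\t$. This yields, for every $\s>1$,
\[
A_\s(P(D)u)\subseteq A_\s(u)\subseteq A_\s(P(D)u)\cup {\rm Char}(P(D)).
\]

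Finally I would take the union over $\s>1$, using $\bigcup_{\s>1}A_\s(\cdot)=\WF_{0,\infty}(\cdot)$ together with the companion distributive identity $\bigcup_\s(B_\s\cup C)=\bigl(\bigcup_\s B_\s\bigr)\cup C$ for the fixed set $C={\rm Char}(P(D))$; this reproduces precisely the asserted double inclusion. I do not expect a genuine obstacle: the only things to get right are the order of the two quantifiers and the two distributive laws, and the single step that truly matters is the index reparametrisation of the second paragraph, which is exactly what lets the constant $2^{\s-1}$ drop out of the limit.
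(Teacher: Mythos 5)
Your proposal is correct and is essentially the paper's own argument: the authors simply state that Corollary \ref{FundamentalEstimateCor} "follows directly from Theorem \ref{glavnateorema} and Remark \ref{RemarkUnijaPresek}", and your write-up is exactly that implicit set algebra, with the key point — that the bijection $\t\mapsto 2^{\s-1}\t$ of $(0,\infty)$ makes the loss factor disappear under $\bigcap_{\t>0}$ — correctly identified, and the distributive laws with the $\t$- and $\s$-independent set ${\rm Char}(P(D))$ correctly applied. Since $\WF_{0,\infty}$ is defined via $\WF_{\{\t,\s\}}$ (the Roumieu brackets used in the theorem), no appeal to the Roumieu/Beurling comparison of Remark \ref{RemarkUnijaPresek} is even needed in your version.
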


For the proof of Theorem \ref{glavnateorema} we perform a careful analysis of sequences of cut-off test functions which
lead to specific {\em admissibility condition}. Moreover, we introduce a simple procedure called {\em enumeration}
which is quite useful for the description of asymptotic behavior in microlocalization.
In short, enumeration of a sequence "speeds up" or "slows down"
the decay estimates of single terms
while preserving the asymptotic behavior of the whole sequence.

\par

Different values of parameters $\t > 0 $ and $ \s >1 $ define
different local regularity conditions which in turn implies that in many situations we obtain {\em strict
inclusions} between the corresponding wave front sets. In particular,
$\WF (u) $ is, in general,  a strict subset of the intersections of our wave front sets,
while the intersection of
the Gevrey wave front sets, $ \cap_{t>1}\WF_t$ contains the union of our wave front sets
as a strict subset, see Corollary \ref{strict inclusions}.

\par

We note that our wave front sets are different from $WF_L$
introduced in
\cite[Chapter 8.4]{HermanderKnjiga}
with respect to $C^L$ regularity classes defined by
an increasing sequence of positive numbers such that $p \leq L_p $ and $L_{p+1}\leq C L_p$,
for some $C>0$ and for every $ p \in {\mathbf N}$. When $ L_p = (p+1)^t, $ $ t>1 $, $C^L $ is the Gevrey class.
However, our defining sequence $ \{ p^{\t p^\s} \}_{p \in {\mathbf N}} $
gives $L_p = p^{\t p^{\s-1}}$,
which does not satisfy $L_{p+1}\leq C L_p$, $ p \in {\mathbf N}$,
for any choice of $\t>0$, $\s>1$. Therefore our approach describes another type of regularity than
$C^L$ regularity.

\par

The paper is organized as follows. In Section \ref{sec-1} we observe
sequences of the form $ \{ p^{\t p^\s} \}_{p \in {\mathbf N}} $, $ \t > 0 $
and $ \s > 1$, which  do not satisfy Komatsu's property
(M.2)' (stability under differentiation) which is the basic one
in the theory of ultradifferentiable functions, cf.
\cite{Komatsuultra1}. Next, we use such sequences to
define spaces of ultradifferentiable functions of
regularity weaker than the Gevrey regularity, and study their main
properties. In particular, we discuss stability under the action
of ultradifferentiable operators.

In Section \ref{sec-2} we review the most common local regularity conditions and
wave-front sets of (ultra)distributions.
We introduce the notion of {\em enumeration} to motivate
the definition of wave-front sets with respect to the
regularity introduced in Section \ref{sec-1}.
Due to specific properties of our defining sequences, we had to modify H\"ormander's construction
from \cite{HermanderKnjiga} by introducing a new \emph{admissibility condition}
for sequences of cut-off functions used in the microlocalization.
Next, we describe local regularity via decay estimates on the
Fourier transform side (Propositions \ref{dovoljanUslov} and
\ref{potrebanUslov}) and discuss singular supports
of (ultra)distributions.

Finally, in Section \ref{sec-3} we prove Theorem \ref{glavnateorema}.
Although we follow the general idea
of the proof of
\cite[Theorem 8.6.1]{HermanderKnjiga}
we present here a detailed proof
since our approach brings nontrivial changes and modifications into it.

\par

We remark that some preliminary results of our investigations
are given in \cite{PTT-01}, where test function spaces
for Roumieu type ultradistributions  were considered.

\subsection{Notation} \label{subsec01}
Sets of numbers are denoted in a usual way, e.g. ${\bf N}$ (resp. $\Z$) denotes the set of nonnegative ( resp. positive) integers.
For $x\in {\bf R}_+$ the floor and the ceiling
functions are denoted by $\lf x \rf:=\max\{m\in
\N\,:\,m\leq x\}$ and $\lceil x \rceil := \min\{m\in
\N\,:\,x\leq m\}$. For a multi-index
$\alpha=(\alpha_1,\dots,\alpha_d)\in {\bf N}^d$ we write
$\partial^{\alpha}=\partial^{\alpha_1}\dots\partial^{\alpha_d}$ and
$|\alpha|=|\alpha_1|+\dots |\alpha_d|$. We will often use Stirling's formula:
$$
N!=N^N e^{-N}\sqrt{2\pi N}e^{\theta_N \over 12N},
$$
for some $0<\theta_N<1$, $N\in\N \setminus 0.$ By $C^{m}(K)$, $m\in \N$,
we denote the Banach space of $m$-times continuously differentiable
functions on a compact set $K\subset\subset U$ with smooth boundary, where $U
\subseteq \Rd$ is an open set, $C^{\infty}(K)$ denotes the
set of smooth functions on $K$, $C_K^{\infty}$ are smooth functions supported by $K$, and
${\mathcal A}(U)$ denotes the space of analytic functions on $U$.
The closure of $ U\subset \Rd$ is denoted by $ \overline{U}$.
A conic neighborhood of $\xi_0 \in \Rd  \setminus 0$  is an open cone
$\Gamma \subset \Rd $ such that  $ \xi_0 \in \Gamma$.
The convolution is given by
$(f*g)(x)=\int_{\Rd}f(x-y)g(y) dy$, whenever the integral makes sense.
The Fourier transform $\widehat{f} $ of a  locally integrable function $f$ is normalized to be
${\mathcal F} (f) (\xi) = \widehat{f}(\xi)=\int_{{\Rd}}f(x)e^{-2\pi i x\xi}dx$, $\xi \in \Rd$,
and the definition extends to distributions by duality. Open ball of radius $r$, centered at $x_0\in \Rd$ is denoted by $B_r(x_0)$.

\par

For locally convex topological spaces   $X$ and $Y$,
$X\hookrightarrow Y$ means that $X$ is dense in $Y$ and that the identity mapping
from $X$ to $Y$ is continuous, and we use
$ \varprojlim $  and $\varinjlim $ to denote the projective and inductive limit topologies
 respectively.
By $X'$ we denote the strong dual of $X$ and by
$\langle \cdot, \cdot \rangle_{X}$ the dual pairing between $X$ and
$X'$.

\par

We will observe  $ P(D)=\sum_{|\alpha|\leq m}a_{\alpha} D^{\alpha}$
partial differential operators of order $m$ with constant coefficients.
Then $ P(\xi)=\sum_{|\alpha|\leq m}a_{\alpha} \xi^{\alpha}$, $\xi\in \Rd\backslash\{0\}$,
is the symbol of $P(D)$ and
$\ P_m( \xi)=\sum_{|\alpha|= m}a_{\alpha}{\xi}^{\alpha}$, $\xi\in \Rd\backslash\{0\}$,
is its principal symbol. The characteristic set of $P(D)$ is then given by
\be \label{characteristic set}
{\rm Char}(P(D))=\{\xi \in \Rd\backslash\{0\}\,|\,P_{m}(\xi)=0\}.
\ee
Let $ x_0 \in U $ and  $\xi_0 \not \in {\rm Char}(P)$.
Then there is an compact neighborhood $K\subset U $ of $x_0$ and
a conic neighborhood $\Gamma$ of $\xi_0 $ such that $ P_{m}(\xi)\not=0$
for all $(x, \xi) \in K \times \Gamma$.
Moreover, there exist $ C_1, C_2 >0$ such that
\be
\label{OcenaSaDonjeStrane}
C_1 |\xi|^m \leq P_m(\xi)\leq C_2|\xi|^{m},\quad (x, \xi) \in K\times  \Gamma.
\ee

\par

As usual,  $\D'(U)$ stands for Schwartz distributions, and $\E'(U)$ for compactly supported distributions.
We refer to \cite{Komatsuultra1} for the definition and detailed study of different classes
of ultradifferentiable functions and their duals, and to Remark \ref{EkvivalentneNorme} for the definition of Gevrey classes $\E_{t}(U)$, $\D_{t}(U)$, $t>1$.

Let  $t>1$ and $(x_0,\xi_0)\in U\times \Rd\backslash\{0\}$ and $u\in \D '(U)$.
Then the {\em Gevrey wave front set} $WF_t(u)$
can be defined as follows: $(x_0,\xi_0)\not\in WF_t (u)$ if and only if there exists an open neighborhood
$\Omega$ of $x_0$, a conic neighborhood $\Gamma$ of $\xi_0$ and a bounded sequence
$u_N\in \E'(U)$, such that $u_N=u$ on $\Omega$ and
\be \label{Gevreycondition}
|\widehat u_N(\xi)|\leq A\, \frac{h^{N}N!^t }{|\xi|^{ N }}, \quad N\in { \Z},\,\xi\in\Gamma,
\ee
for some $A,h>0$. In fact, we may take $ u_N = \phi u $ for some $\phi\in \D_{t}(U)$ which is equal to $1$
in a neighborhood of $x_0$.
If $t=1$ in \eqref{Gevreycondition}, then the corresponding wave-front set is  called the {\em analytic wave front set}
and denoted by  $WF_A(u)$.
We refer to \cite{HermanderKnjiga, Rodino, Foland} for
the classical  wave-front set.

\section{Regularity classes $\E_{\t,\s}$} \label{sec-1}

In this section we first observe sequences
$M_p^{\t,\s}=p^{\t p^{\s}}$, $p\in \N $, where $\t>0$ and  $\s>1$, and
list their basic properties in Subsection \ref{SekcijaOsobineNiza}.
The flexibility obtained by introducing the two-parameter dependence
enables us to introduce and study smooth  functions which are less regular than the Gevrey functions,
see Subsection \ref{klase}.
In Subsection \ref{svojstva} the action of ultradifferentiable operators
on such classes is studied.

\subsection{The defining sequence $M_p^{\t,\s}$}\label{SekcijaOsobineNiza}
Basic properties of our defining sequences are
given in the following lemma. We refer to \cite{PTT-01} for the proof.

\begin{lema} \label{osobineM_p_s}
Let $\tau>0$, $\s>1$ and $M_p^{\tau,\s}=p^{\tau p^{\s}}$, $p\in \Z$, $M_0^{\tau,\s}=1$.
Then the following properties hold:

$(M.1)$ $(M_p^{\tau,\s})^2\leq M_{p-1}^{\tau,\s}M_{p+1}^{\tau,\s}$, $p\in \Z$, \medskip

$\widetilde{(M.2)'}$
$M_{p+1}^{\tau,\s}\leq C^{p^{\s}}M_p^{\tau,\s}$, for some  $C> 1$, $p\in \N$,  \medskip

$\widetilde{(M.2)}$ $M_{p+q}^{\tau,\s}\leq
C^{p^{\s} + q^{\s}}M_p^{\tau 2^{\s-1},\s}M_q^{\tau 2^{\s-1},\s}$,
$p,q\in \N$, for some $ C>1$.

$(M.3)'$ $ \displaystyle \suml_{p=1}^{\infty}\frac{M_{p-1}^{\t,\s}}{M_p^{\t,\s}} <\infty.$
\end{lema}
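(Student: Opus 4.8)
The plan is to pass to the logarithmic scale, since all four properties are multiplicative. Writing $\ln M_p^{\t,\s} = \t\, g(p)$ with $g(x) = x^{\s}\ln x$, each claim becomes an additive statement about $g$, and the convexity of $g$ on $[1,\infty)$ is what drives everything. A direct computation gives $g'(x) = x^{\s-1}(\s\ln x + 1)$ and $g''(x) = x^{\s-2}\big[(\s-1)(\s\ln x + 1) + \s\big]$, which is strictly positive for $x\geq 1$ and $\s>1$; thus $g$ is convex, and I would use this together with the mean value theorem applied to $g$ and to $x\mapsto x^{\s}$ repeatedly. For $(M.1)$, convexity of $g$ yields $2g(p)\leq g(p-1)+g(p+1)$ for every integer $p\geq 2$, which is exactly the required inequality after multiplying by $\t$ and exponentiating; the remaining case $p=1$ reduces to $1\leq 2^{\t 2^{\s}}$ and is immediate.

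For $\widetilde{(M.2)'}$ I would estimate the increment $g(p+1)-g(p)=g'(\xi)$ for some $\xi\in(p,p+1)$, where $g'(\xi)=\xi^{\s-1}(\s\ln\xi+1)$. Since this quantity is $O(p^{\s-1}\ln p)=o(p^{\s})$, one can choose $C>1$ large enough that $\t\big(g(p+1)-g(p)\big)\leq p^{\s}\ln C$ uniformly in $p$, which gives $M_{p+1}^{\t,\s}\leq C^{p^{\s}}M_p^{\t,\s}$ upon exponentiating.

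For $(M.3)'$ a crude bound suffices: from $(p-1)^{\t(p-1)^{\s}}\leq p^{\t(p-1)^{\s}}$ we get $M_{p-1}^{\t,\s}/M_p^{\t,\s}\leq p^{-\t(p^{\s}-(p-1)^{\s})}$, and since $x\mapsto x^{\s}$ has increasing derivative, $p^{\s}-(p-1)^{\s}\geq \s(p-1)^{\s-1}$. Hence the ratio is bounded by $p^{-\t\s(p-1)^{\s-1}}$, which is eventually dominated by $p^{-2}$ because $\s>1$, so the series converges by comparison.

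The main obstacle is $\widetilde{(M.2)}$, where the doubling of the parameter $\t\mapsto \t 2^{\s-1}$ on the right is essential. The structural input is the convexity inequality $(p+q)^{\s}\leq 2^{\s-1}(p^{\s}+q^{\s})$. Assuming without loss of generality that $p\geq q$ and combining this with $\ln(p+q)\leq \ln 2+\ln p$, I would bound $\t(p+q)^{\s}\ln(p+q)\leq \t 2^{\s-1}(p^{\s}+q^{\s})(\ln 2+\ln p)$ and then split off the mismatched cross term $\t 2^{\s-1}q^{\s}(\ln p-\ln q)$. The elementary inequality $\ln(p/q)\leq (p/q)^{\s}+1$, valid for $p\geq q$ and $\s>1$, shows $q^{\s}\ln(p/q)\leq p^{\s}+q^{\s}$, so this cross term together with the $\ln 2$ contributions is absorbed into the factor $C^{p^{\s}+q^{\s}}$, with $\ln C=\t 2^{\s-1}(\ln 2+1)$. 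The delicate part is exactly this bookkeeping: arranging that only $p^{\s}\ln p+q^{\s}\ln q$ survives inside $\t 2^{\s-1}(g(p)+g(q))$ while everything left over is $O(p^{\s}+q^{\s})$, so that the doubled parameter on the right is both necessary and sufficient.
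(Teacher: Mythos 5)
Your proof is correct in all four parts, and there is nothing in-source to compare it against: the paper states Lemma \ref{osobineM_p_s} without proof, deferring to \cite{PTT-01}, so your argument stands as a self-contained verification along what is the natural (and standard) route, namely passing to $\ln M_p^{\t,\s}=\t g(p)$ with $g(x)=x^{\s}\ln x$. Each step checks out: $g''(x)=x^{\s-2}\bigl[(\s-1)(\s\ln x+1)+\s\bigr]>0$ on $[1,\infty)$ gives $(M.1)$ for $p\geq 2$, and the boundary case $p=1$ is indeed just $1\leq 2^{\t 2^{\s}}$; the mean value bound $g(p+1)-g(p)=g'(\xi)=O(p^{\s-1}\ln p)=o(p^{\s})$ yields a uniform $C$ for $\widetilde{(M.2)'}$ (the case $p=0$ holds with equality since $M_0^{\t,\s}=M_1^{\t,\s}=1$); for $\widetilde{(M.2)}$ your bookkeeping is sound, since with $p\geq q\geq 1$ the cross term satisfies $q^{\s}\ln(p/q)\leq p^{\s}+q^{\s}$ (already $\ln x\leq x\leq x^{\s}$ for $x\geq 1$ suffices), which combined with $(p+q)^{\s}\leq 2^{\s-1}(p^{\s}+q^{\s})$ and $\ln(p+q)\leq\ln 2+\ln p$ gives the claim with $\ln C=\t 2^{\s-1}(\ln 2+1)$, the cases $pq=0$ being trivial because $M_p^{\t,\s}\leq M_p^{\t 2^{\s-1},\s}$; and the comparison $M_{p-1}^{\t,\s}/M_p^{\t,\s}\leq p^{-\t\s(p-1)^{\s-1}}\leq p^{-2}$ for large $p$ settles $(M.3)'$. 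The only loose end is your closing remark that the doubled parameter $\t 2^{\s-1}$ is ``necessary'': you do not prove this, though it is true (take $p=q\to\infty$ and compare the coefficients of $p^{\s}\ln p$ on both sides); since the lemma asserts only the inequality, omitting that verification costs nothing.
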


If  $\s=1$ then $\widetilde{(M.2)'}$ and $\widetilde{(M.2)}$
are standard Komatsu's conditions $(M.2)'$ and $(M.2)$,
respectively.

We will occasionally use
Stirling's formula
 \begin{equation}
\label{niz1}
\lf p^{\s} \rf!^{\t/\s}\sim  (2\pi)^{\t/(2\s)}p^{\t/2}e^{-(\t/\s)p^{\s}}M_p^{\t,\s} , \quad p\to \infty.
\end{equation}

\subsection{Classes of ultradifferentiable functions} \label{klase}
Let $\tau>0$, $\s>1$, $h>0$, and
$K\subset\subset U$, where $U$ is an open set in $\Rd$. A smooth function $\phi$ on $U$
belongs to the space ${\E}_{\t, {\s},h}(K)$ if there exists $A>0$
such that
$$ \displaystyle |\partial^{\alpha}\phi(x)|\leq A
h^{|\alpha|^{\s}}|\alpha|^{\t|{\alpha}|^{\s}},\quad \alpha \in \N^d, x\in K.
$$
It is a Banach space with the norm given by
\begin{equation} \label{Norma}
\| \phi \|_{{\E}_{\t, {\s},h}(K)}=\sup_{\alpha \in \N^d}\sup_{x\in K}
\frac{|\partial^{\alpha} \phi (x)|}{h^{|\alpha|^{\s}}|\alpha|^{\t|\alpha|^{\s}}}\,,
\end{equation}
and $ \displaystyle
{\E}_{\t_1, {\s_1},h_1}(K)\hookrightarrow {\E}_{\t_2,
{\s_2},h_2}(K)$, $0<h_1\leq h_2,$ $0<\t_1\leq\t_2$, $1<\s_1\leq \s_2.$

Let ${\D}^K_{\t, \s,h}$ be the set of functions in ${\E}_{\t,
\s,h}(K)$ with support contained in $K$. Then, in the topological sense,
we set
\begin{equation}
\label{NewClassesInd} {\E}_{\{\t,
\s\}}(U)=\varprojlim_{K\subset\subset U}\varinjlim_{h\to
\infty}{\E}_{\t, {\s},h}(K),
\end{equation}
\begin{equation}
\label{NewClassesProj} {\E}_{(\t,
\s)}(U)=\varprojlim_{K\subset\subset U}\varprojlim_{h\to 0}{\E}_{\t,
{\s},h}(K),
\end{equation}
\begin{equation}
\label{NewClassesInd2} {\D}_{\{\t,
\s\}}(U)=\varinjlim_{K\subset\subset U} {\D}^K_{\{\t, \s\}}
=\varinjlim_{K\subset\subset U} (\varinjlim_{h\to\infty}{\D}^K_{\t,
\s,h})\,,
\end{equation}
\begin{equation}
\label{NewClassesProj2} {\D}_{(\t,
\s)}(U)=\varinjlim_{K\subset\subset U} {\D}^K_{(\t, \s)}
=\varinjlim_{K\subset\subset U} \varprojlim_{h\to 0}{\D}^K_{\t,
\s,h}.
\end{equation}
We will use abbreviated notation $ \t,\s $ for
$\{\t,\s\}$ or $(\t,\s)$ .
It can be proved that
the spaces ${\E}_{\t, \s}(U)$, ${\D}^K_{\t, \s}$ and ${\D}_{\t, \s}(U)$
are nuclear, cf. \cite{PTT-01}.

\begin{rem} \label{EkvivalentneNorme}
From Lemma \ref{osobineM_p_s} it follows that the norms in \eqref{Norma} can be replaced by
\begin{equation}
\|\phi\|^{\sim}_{{\E}_{\tau, {\s},h}(K)}=\sup_{\alpha \in
\N^d}\sup_{x\in K}\frac{|\partial^{\alpha} \phi
(x)|}{h^{|\alpha|^{\s}}\lf|\alpha|^{\s}\rf!^{\tau/\s}}<\infty,\quad
h>0.
\end{equation} If $ \t > 1 $ and $\s = 1$,
then $ {\E}_{\t, 1}(U)={\E}_{\t}(U)$  are the Gevrey classes
and $\D_{\t,1}(U)=\D_{\t}(U)$ are the corresponding subspaces of compactly supported functions in $\E_{\t}(U)$. When $0<\t\leq 1$ and $ \s = 1$ such spaces are contained in the corresponding spaces of quasianalytic functions. In particular, $\dss
\D_{\t}(U)=\{0\}$ when $0<\t\leq 1$.
\end{rem}

By the Borel Theorem (cf. \cite{HermanderKnjiga, N}), there exists a smooth function $f$ such that
$$f^{(p)}(0)=p^{\tau p^\sigma}, p\in \mathbb Z_+,
$$
and from the Whitney extension theorem we may conclude that ${\E}_{\t, \s}(U) \neq \emptyset$.
However, there does not exist any sequence $(M_p)_p$ of
the Komatsu class so that the corresponding space of ultradifferentiable functions contain $f$.
Moreover, the existence of compactly supported functions in ${\D}_{\t, \s}(U)$
which are not in Gevrey classes ${\D}_t(U)$ for any $t>1$, and
of compactly supported function $\phi\in {\E_{\t,\s}}(U)$
such that $0\leq\phi\leq 1$ and $\int_{\Rd}\phi\,dx=1$ is discussed in
\cite{PTT-01}.

The basic embeddings between the introduced spaces with respect to $\s$ and $\t$ are
given in the following proposition.

\begin{prop}
\label{detectposition} Let $\s_1\geq 1$. Then for every $\s_2>\s_1$
and $\t>0$
\begin{equation}
\label{Theta_S_embedd} \varinjlim_{\t\to \infty}{\E}_{\t,
{\s_1}}(U)\hookrightarrow \varprojlim_{\t\to 0^+} {\E}_{\t,
{\s_2}}(U).
\end{equation} Moreover, if $0<\t_1<\t_2$, then for every $\s\geq 1$ it holds
\be \label{RoumieuBeurling} \E_{\{\t_1,\s\}}(U)\hookrightarrow
\E_{(\t_2,\s)}(U)\hookrightarrow \E_{\{\t_2,\s\}}(U), \ee
and
$$
\varinjlim_{\t\to \infty}{\E}_{\{\t, {\s}\}}(U)= \varinjlim_{\t\to \infty} {\E}_{(\t, {\s})}(U),
\;\;\;
\varprojlim_{\t\to 0^+}{\E}_{\{\t, {\s}\}}(U)= \varprojlim_{\t\to 0^+} {\E}_{(\t, {\s})}(U).
$$

\end{prop}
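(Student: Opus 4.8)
The plan is to prove the three assertions of Proposition \ref{detectposition} by reducing everything to elementary comparisons of the defining weights $h^{|\alpha|^\s}|\alpha|^{\t|\alpha|^\s}$, which is what the Banach-space norms \eqref{Norma} encode. Throughout I would set $p=|\alpha|$ and work with the scalar sequences $h^{p^\s}p^{\t p^\s}$, since an embedding $\E_{\t_1,\s_1,h_1}(K)\hookrightarrow\E_{\t_2,\s_2,h_2}(K)$ holds precisely when $h_1^{p^{\s_1}}p^{\t_1 p^{\s_1}}\le C\,h_2^{p^{\s_2}}p^{\t_2 p^{\s_2}}$ for all $p$ and some $C>0$; the density of the inclusions then follows from the density of $\D^K_{\t,\s,h}$ and the standard fact that the smaller space already contains enough test functions, so the analytic core is the pointwise weight estimate. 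For the continuity part of each $\hookrightarrow$ I would exhibit the constant $C$ explicitly and note that the identity map is bounded between the relevant Banach (or limit) spaces.

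First I would establish \eqref{Theta_S_embedd}. Fix $K\subset\subset U$ and take any $\phi$ in $\E_{\t,\s_1}(U)$ for some finite $\t$, so that $|\partial^\alpha\phi(x)|\le A\,h^{p^{\s_1}}p^{\t p^{\s_1}}$ for some $h>0$; I must show that for every prescribed small $\t'>0$ there is $h'>0$ making $\phi$ lie in $\E_{\t',\s_2,h'}(K)$, i.e. $h^{p^{\s_1}}p^{\t p^{\s_1}}\le C\,(h')^{p^{\s_2}}p^{\t' p^{\s_2}}$. Taking logarithms this reads $p^{\s_1}\log h+\t p^{\s_1}\log p\le \log C+p^{\s_2}\log h'+\t' p^{\s_2}\log p$, and since $\s_2>\s_1$ the dominant term on the right, $\t' p^{\s_2}\log p$, eventually overwhelms $\t p^{\s_1}\log p$ regardless of how small $\t'$ is. Thus the inequality holds for all large $p$ with, say, $h'=1$, and one absorbs the finitely many small $p$ into the constant $C$. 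The key structural point is that the jump from exponent $\s_1$ to any strictly larger $\s_2$ in the super-polynomial part dominates any finite change of the linear parameter $\t$, which is exactly the statement that the scale in $\s$ is coarser than the scale in $\t$.

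Next I would treat \eqref{RoumieuBeurling}, the Roumieu--Beurling interpolation at fixed $\s$. The right inclusion $\E_{(\t_2,\s)}(U)\hookrightarrow\E_{\{\t_2,\s\}}(U)$ is immediate since the projective limit over $h\to0$ is contained in the inductive limit over $h\to\infty$ at the same $\t_2$. For the left inclusion $\E_{\{\t_1,\s\}}(U)\hookrightarrow\E_{(\t_2,\s)}(U)$ I must show that membership in $\E_{\t_1,\s,h}(K)$ for some $h$ forces the estimate $|\partial^\alpha\phi(x)|\le A_{h'}\,(h')^{p^\s}p^{\t_2 p^\s}$ for every $h'>0$; comparing weights, it suffices that $h^{p^\s}p^{\t_1 p^\s}\le C\,(h')^{p^\s}p^{\t_2 p^\s}$, equivalently $(h/h')^{p^\s}\le C\,p^{(\t_2-\t_1)p^\s}$. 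Since $\t_2-\t_1>0$, the factor $p^{(\t_2-\t_1)p^\s}$ grows faster than any fixed geometric term $(h/h')^{p^\s}$, so the inequality holds for large $p$ for arbitrary $h'$, giving the required projective-limit control. The two displayed equalities $\varinjlim_{\t\to\infty}\E_{\{\t,\s\}}(U)=\varinjlim_{\t\to\infty}\E_{(\t,\s)}(U)$ and the projective analogue then follow formally by squeezing: \eqref{RoumieuBeurling} sandwiches each Beurling space between two Roumieu spaces with arbitrarily close parameters, so passing to the limit in $\t$ collapses the distinction.

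The step I expect to require the most care is not any single inequality but the \emph{density} half of each $\hookrightarrow$, since our defining sequence fails the stability condition $\widetilde{(M.2)'}$ in the borderline sense and one cannot invoke the usual Denjoy--Carleman machinery verbatim. I would handle density by appealing to the existence of compactly supported functions in $\D_{\t,\s}(U)$ established in \cite{PTT-01} together with the nuclearity of the spaces noted after \eqref{NewClassesProj2}: a standard mollification and cut-off argument, using that these test functions already populate the smaller space densely, transfers density up the chain of inclusions. The pointwise weight estimates above are then purely computational and pose no obstacle once the correct $\s$-versus-$\t$ dominance is isolated.
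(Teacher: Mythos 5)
Your proof is correct and follows essentially the same route as the paper: both reduce every embedding to a pointwise comparison of the weights $h^{p^{\s}}p^{\t p^{\s}}$, with the jump from $\s_1$ to $\s_2$ dominating any finite $\t$ for \eqref{Theta_S_embedd} (which the paper simply cites from \cite{PTT-01}) and the factor $p^{(\t_2-\t_1)p^{\s}}$ absorbing any geometric term $(h/h')^{p^{\s}}$ for \eqref{RoumieuBeurling}, where the paper merely sharpens your asymptotic-dominance argument to the explicit bound $e^{\frac{\t_2-\t_1}{e\s}(k/h)^{\s/(\t_2-\t_1)}}$ on the supremum of the quotient of weights. Your sandwich argument for the two limit-space equalities and your remarks on the density half of $\hookrightarrow$ (which the paper leaves implicit) are consistent with, and slightly more explicit than, the paper's treatment.
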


\begin{proof}
For the proof of \eqref{Theta_S_embedd} we refer to \cite[Proposition 2.1]{PTT-01}.
Since the second embedding in (\ref{RoumieuBeurling}) is trivial, we proceed with the proof of the first one.
Let $\phi\in \E_{\t_1,\s,k}(K)$ for some $k>0$. Since
$$
\dss ||\phi||_{\E_{\t_2,\s,h}(K)}\leq \sup_{\alpha\in \N^d}\frac{k^{|\alpha|^{\s}}|\alpha|^{\t_1 |\alpha|^{\s}}}{h^{|\alpha|^{\s}}|\alpha|^{\t_2 |\alpha|^{\s}}} ||\phi||_{\E_{\t_1,\s,k}(K)},\quad h,k>0,
$$
and
$\dss \sup_{\alpha\in \N^d}\frac{k^{|\alpha|^{\s}}|\alpha|^{\t_1 |\alpha|^{\s}}}{h^{|\alpha|^{\s}}|\alpha|^{\t_2 |\alpha|^{\s}}}
\leq
e^{\frac{\t_2-\t_1}{e \s}(k/h)^{\frac{\s}{\t_2-\t_1}}},
$
then for any given $h>0$ there exists $C>0$ such that
$ \dss
||\phi||_{\E_{\t_2,\s,h}(K)}\leq C ||\phi||_{\E_{\t_1,\s,k}(K)},
$
and the proof is finished.
\end{proof}

We denote the corresponding projective
(when $\t \rightarrow 0^+ $ or when $\s \rightarrow 1^+$) and inductive
(when $\t \rightarrow \infty $  or when  $\s \rightarrow \infty$)
limit spaces  as follows:
$$
\E_{0,\s}(U):=\varprojlim_{\t\to 0^+}\E_{\t,{\s}}(U), \;\;\;
\E_{\infty,\s}(U):=\varinjlim_{\t\to \infty}\E_{\t,{\s}}(U),
$$
$$
\E_{\t,1}(U):=\varprojlim_{\s\to 1^+}\E_{\t,{\s}}(U), \;\;\;
\E_{\t,\infty}(U):=\varinjlim_{\s\to \infty}\E_{\t,{\s}}(U),
$$
\be
\label{BorderLinePresek}
\E_{0,1}(U):= \varprojlim_{\s\to 1^+}\E_{0,\s}(U), \;\;\;
\E_{0,\infty}(U):=\varinjlim_{\s\to \infty}\E_{0,\s}(U),
\ee
\be
\label{BorderLineUnija}
\E_{\infty,1}(U):=\varprojlim_{\s\to 1^+}\E_{\infty,\s}(U), \;\;\;
\E_{\infty,\infty}(U):=\varinjlim_{\s\to \infty}\E_{\infty,\s}(U),
\ee
Then Proposition \ref{detectposition} implies
the following dense embeddings:
\begin{multline}
\label{tildaprop1} \varinjlim_{\t\to\infty}\E_{\t}(U)\hookrightarrow
{\E}_{0, 1}(U)\hookrightarrow {\E}_{\infty,
1}(U)
\\[1ex]
\hookrightarrow \E_{0,\infty}(U) \hookrightarrow
\E_{\infty,\infty}(U) \hookrightarrow C^\infty (U).
\end{multline}

In fact, the first embedding $\dss \varinjlim_{t\to\infty}\E_{\t}(U)\hookrightarrow
{\E}_{0, 1}(U) $ in (\ref{tildaprop1}) follows directly from
Proposition \ref{detectposition} when $\s_2>\s_1=1$.
The embedding $\dss {\E}_{0, 1}(U)\hookrightarrow {\E}_{\infty, 1}(U)$ is obvious.
Fix $\s_1>1$ and let $\s_2>\s_1$. Then for some $\t_0>0$
$$
{\E}_{\t_0, \s_1}(U)
\hookrightarrow {\E}_{0, \s_2}(U)
\hookrightarrow \E_{0,\infty}(U),
$$
where the first embedding follows from (\ref{Theta_S_embedd}) and the last one is trivial. This implies $\E_{\infty,1}(U)\hookrightarrow \E_{0,\infty}(U)$. Since the embeddings
$$
\E_{0,\infty}(U) \hookrightarrow \E_{\infty,\infty}(U)\hookrightarrow C^\infty (U)
$$
are trivial,  \eqref{tildaprop1} is proved.

\subsection{Continuity properties of ultradifferentiable operators on ${\E_{\t,\s}}(U)$} \label{svojstva}
The space ${\E}_{\tau, \s}(U)$ can not be closed
under the action of differential operator $\partial^\alpha $ for any given $\t>0$ and $\s>1$
since then $M_p^{\tau,\s}$ does not satisfy Komatsu's
condition (M.2)'. However, if we consider
$\E_{\infty,\s}(U)$
instead, then $\widetilde{(M.2)}$ provides the continuity of certain ultradifferentiable operators.

\begin{de} Let $\t > 0 $ and $\s \geq 1 $ and let
$a_{\alpha} (x) \in {\E}_{(\t, \s)}(U)$ (resp. $a_{\alpha} (x) \in {\E}_{\{\t,
\s\}}(U)$.
Then
$ \displaystyle
P(x,\partial)=\suml_{|\alpha|=0}^{\infty}a_{\alpha}(x){\partial}^{\alpha}
$
is ultradifferentiable operator of class $(\t,\s)$
(resp. $\{\t,\s\}$) on $U\subseteq \Rd$ if for every $K\subset\subset U$ there exists constant $L>0$ such that for any $h>0$ there exists $A>0$ (resp. for every $K\subset\subset U$ there exists $h>0$ such that for any $L>0$ there exists $A>0$) such that,

\begin{equation}
\label{Operatortausigma}
\sup_{x\in K}|\partial^{\beta}a_{\alpha}(x)|\leq A h^{{|\beta|}^{\s}}|\beta|^{\t{|\beta|}^{\s}}\frac{L^{|\alpha|^{\s}}}{|\alpha|^{\t 2^{\s-1}{|\alpha|}^{\s}}},\quad {\alpha,\beta \in \N^d}.
\end{equation} $P(x,\partial)$ is of the class $\t,\s$ if it is of the class $(\t,\s)$ or $\{\t,\s\}$.
\end{de}

In particular $\t,1$ are ultradifferentiable operators of class $*$ where $*=\{p!^{\t}\}$ or $(p!^{\t})$ in Komastu's notation, cf. \cite{KomatsuNotes}.

\begin{te}
\label{TeoremaZatvorenostUltraDfOP}
Let there be given $\t > 0 $, $\s >1 $ and let
$P(x,\partial)$ be an ultradifferentiable operator of class $(\t,\s)$ (resp. $\{\t,\s\}$).
Then ${\E_{(\infty,\s)}}(U)$ (resp. ${\E_{\{\infty,\s\}}}(U)$) is closed under the action of
$P(x,\partial)$. In particular,
\be
P(x,\partial):\quad {\E}_{\t, \s}(U) \longrightarrow {\E}_{\t
2^{\s-1}, \s}(U)\,,
\ee
is a continuous linear map.
\end{te}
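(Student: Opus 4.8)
The plan is to reduce the whole statement to a single seminorm estimate on a compact set $K\subset\subset U$ and to extract the growth $M_{|\beta|}^{\t 2^{\s-1},\s}$ from a Leibniz expansion by playing the two halves of Lemma \ref{osobineM_p_s} against each other. Fix $K$ and take $\phi\in\E_{\t,\s,k}(K)$, so that $|\partial^\gamma\phi(x)|\le A' k^{|\gamma|^\s}M_{|\gamma|}^{\t,\s}$ on $K$, writing $|\gamma|^{\t|\gamma|^\s}=M_{|\gamma|}^{\t,\s}$. First I would record that the series $\sum_\alpha a_\alpha\partial^\alpha\phi$ and all its formal derivatives converge uniformly on $K$ (a by-product of the decay in $\alpha$ obtained below, since $M_{|\alpha|}^{\t,\s}/M_{|\alpha|}^{\t 2^{\s-1},\s}\le 1$), so that $P(x,\partial)\phi\in C^\infty(K)$ and term-by-term differentiation via the Leibniz rule is legitimate:
\begin{equation}
\partial^\beta\big(P(x,\partial)\phi\big)=\sum_{\alpha}\sum_{\gamma\le\beta}\binom{\beta}{\gamma}\,(\partial^{\beta-\gamma}a_\alpha)\,(\partial^{\gamma+\alpha}\phi).
\end{equation}
Into each summand I would then insert the coefficient bound \eqref{Operatortausigma} and the bound for $\phi$.

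The heart of the argument is the following chain. Since $|\gamma+\alpha|=|\gamma|+|\alpha|$, condition $\widetilde{(M.2)}$ splits the $\phi$-factor as
\begin{equation}
M_{|\gamma|+|\alpha|}^{\t,\s}\le C^{|\gamma|^\s+|\alpha|^\s}\,M_{|\gamma|}^{\t 2^{\s-1},\s}\,M_{|\alpha|}^{\t 2^{\s-1},\s},
\end{equation}
and the factor $M_{|\alpha|}^{\t 2^{\s-1},\s}=|\alpha|^{\t 2^{\s-1}|\alpha|^\s}$ cancels \emph{exactly} against the denominator in \eqref{Operatortausigma}, leaving a clean geometric factor $L^{|\alpha|^\s}$ in $\alpha$. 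This exact matching is precisely why the denominator in the definition carries the exponent $\t 2^{\s-1}$ and why the target index is $\t 2^{\s-1}$; the naive split without the $2^{\s-1}$ boost is false for $\s>1$. For the remaining $\gamma$- and $(\beta-\gamma)$-factors I would use the elementary inequality $M_p^{\t'}M_q^{\t'}\le M_{p+q}^{\t'}$ (valid for any $\t'$ since $p,q\le p+q$ and $p^\s+q^\s\le(p+q)^\s$) with $\t'=\t 2^{\s-1}$, after first raising the $(\beta-\gamma)$-factor from exponent $\t$ to $\t 2^{\s-1}$ (allowed as $\t\le\t 2^{\s-1}$). Because $|\gamma|+|\beta-\gamma|=|\beta|$ this produces exactly $M_{|\beta|}^{\t 2^{\s-1},\s}$. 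The geometric bases $h,k,C$ attached to $\gamma$ and $\beta-\gamma$ are then collapsed into one factor $H^{|\beta|^\s}$ using $a^\s+b^\s\le(a+b)^\s\le 2^{\s-1}(a^\s+b^\s)$, while the binomial coefficients (summing to $2^{|\beta|}$) are absorbed via $2^{|\beta|}\le 2^{|\beta|^\s}$.

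It remains to sum over $\alpha$. After the cancellation the $\alpha$-dependence is only a geometric base $(LC\tilde k)^{|\alpha|^\s}$, where $\tilde k$ is the power of $k$ produced when $k^{|\gamma+\alpha|^\s}$ is split. Since there are only $O(n^{d-1})$ multi-indices with $|\alpha|=n$, the series $\sum_\alpha(LC\tilde k)^{|\alpha|^\s}$ converges as soon as $LC\tilde k<1$; here $\s>1$ is essential, forcing $(LC\tilde k)^{n^\s}$ to decay superexponentially. Securing this inequality together with the correct placement of the output space is the main point to get right, and is where the Beurling/Roumieu quantifier structure of the Definition enters. In the Beurling case $(\t,\s)$ the operator constant $L$ is fixed while $h$ may be taken arbitrarily small, and $\phi\in\E_{(\t,\s)}(U)$ allows $k$ (hence $\tilde k$) arbitrarily small; one first shrinks $k$ to get $LC\tilde k<1$, then shrinks $h$ and $k$ to make $H$ arbitrarily small, placing $P(x,\partial)\phi$ in \emph{every} $\E_{\t 2^{\s-1},\s,H}(K)$, i.e. in $\E_{(\t 2^{\s-1},\s)}(U)$. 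In the Roumieu case $\{\t,\s\}$ the parameters $h$ and $k$ are fixed (possibly large) while $L$ may be taken arbitrarily small; one shrinks $L$ to get $LC\tilde k<1$ and obtains $P(x,\partial)\phi\in\E_{\t 2^{\s-1},\s,H}(K)$ for the resulting finite $H$, i.e. in $\E_{\{\t 2^{\s-1},\s\}}(U)$.

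Finally, every estimate above is linear in $\|\phi\|$ through the constant $A'$, which yields $\|P(x,\partial)\phi\|_{\E_{\t 2^{\s-1},\s,H}(K)}\le \mathrm{const}\cdot\|\phi\|_{\E_{\t,\s,k}(K)}$ on each Banach step, hence continuity of $P(x,\partial)\colon\E_{\t,\s}(U)\to\E_{\t 2^{\s-1},\s}(U)$ on the projective/inductive limits. Closedness of $\E_{(\infty,\s)}(U)$ and $\E_{\{\infty,\s\}}(U)=\varinjlim_{\t\to\infty}\E_{\t,\s}(U)$ (see Proposition \ref{detectposition}) is then immediate: if $\phi\in\E_{\tau_0,\s}(U)$ then $P(x,\partial)\phi\in\E_{\tau_0 2^{\s-1},\s}(U)\subseteq\E_{\infty,\s}(U)$. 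The principal obstacle, as indicated, is the bookkeeping that simultaneously guarantees convergence of the $\alpha$-sum and lands $P(x,\partial)\phi$ in the correct limit space, matching the quantifiers of the operator class to those defining $\E_{\t,\s}(U)$.
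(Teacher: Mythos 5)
Your proposal is correct and takes essentially the same route as the paper's proof: a Leibniz expansion into which the coefficient bound \eqref{Operatortausigma} is inserted, the combination of $\widetilde{(M.2)}$ with the superadditivity $M_p^{\t',\s}M_q^{\t',\s}\le M_{p+q}^{\t',\s}$ to cancel the $\alpha$-denominator exactly and produce $M_{|\beta|}^{\t 2^{\s-1},\s}$, the collapse of the geometric factors via $|\gamma|^{\s}+|\alpha|^{\s}\le|\gamma+\alpha|^{\s}\le 2^{\s-1}\bigl(|\gamma|^{\s}+|\alpha|^{\s}\bigr)$ (the paper's $c_h$ device), and closing the $\alpha$-sum by shrinking $h,k$ in the Beurling case resp.\ $L$ in the Roumieu case, exactly matching the paper's choice $LCc_h<1/2$. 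The only deviation is cosmetic: you split $M_{|\gamma|+|\alpha|}^{\t,\s}$ by $\widetilde{(M.2)}$ first and then merge the two $\beta$-pieces at level $\t 2^{\s-1}$, whereas the paper first merges $M_{|\beta-\gamma|}^{\t,\s}M_{|\alpha+\gamma|}^{\t,\s}\le M_{|\alpha+\beta|}^{\t,\s}$ and then splits once.
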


\begin{proof}
Let $a_{\alpha},\phi\in {\E}_{\t, \s,h}(K)$, $\alpha\in \N^d$, $h>0$.
By (\ref{Operatortausigma}) we have
$$
|\partial^{\beta}(a_{\alpha}(x)\partial^{\alpha}\phi(x))|
\leq
\suml_{\gamma\leq \beta}{\beta\choose \gamma}|\partial^{\beta-\gamma}a_{\alpha}(x)||\partial^{\alpha+\gamma}\phi(x)|
$$
$$
\leq A ||\phi||_{{\E}_{\t, \s,h}(K)} \suml_{\gamma\leq \beta}{\beta\choose \gamma}  h^{{|\beta-\gamma|}^{\s}}(|\beta-\gamma|)^{{\t |\beta-\gamma|}^{\s}}
\frac{L^{|\alpha|^{\s}}  h^{{|\alpha+\gamma|}^{\s}}}{|\alpha|^{\t 2^{\s-1}{|\alpha|}^{\s}}}(|\alpha+\gamma|)^{{\t |\alpha+\gamma|}^{\s}}
$$
$$
\leq A||\phi||_{{\E}_{\t, \s,h}(K)}\frac{L^{|\alpha|^{\s}}}{|\alpha|^{\t 2^{\s-1}{|\alpha|}^{\s}}}
(|\alpha+\beta|)^{{\t |\alpha+\beta|}^{\s}}
\suml_{\gamma\leq \beta}{\beta\choose \gamma} h^{{|\beta-\gamma|}^{\s}+{|\alpha+\gamma|}^{\s}}
$$
\begin{equation}
\label{ultradif1}
\leq A ||\phi||_{{\E}_{\t, \s,h}(K)} (C L)^{|\alpha|^{\s}}C^{|\beta|^{\s}}|\beta|^{\t 2^{\s-1}|\beta|^{\s}} C_{h,\beta},
\end{equation}
where we have used the fact that  $M_p^{\t,\s}$ satisfies
$(M.1)'$ and $\widetilde{(M.2)}$,
and put
$\dss C_{h,\beta}=\suml_{\gamma\leq \beta}{\beta\choose \gamma} h^{{|\beta-\gamma|}^{\s}+{|\alpha+\gamma|}^{\s}}$.
Since
$$
\frac{1}{2^{\s-1}} (|\alpha|^{\s}+|\beta|^{\s}) \leq
{|\beta-\gamma|}^{\s}+{|\alpha+\gamma|}^{\s}
\leq
2^{\s-1}(|\alpha|^{\s}+|\beta|^{\s}),
\;\;\; \gamma\leq \beta,
$$
we have
$$C_{h,\beta}\leq 2^{|\beta|}h^{\frac{1}{2^{\s-1}}|\alpha|^{\s}}h^{\frac{1}{2^{\s-1}}|\beta|^{\s}}, \quad 0<h<1,$$
and
$$C_{h,\beta}\leq 2^{|\beta|}h^{2^{\s-1}|\alpha|}h^{2^{\s-1}|\beta|}, \quad h\geq 1.
$$
Put $c_h=\max\{h^{\frac{1}{2^{\s-1}}},h^{2^{\s-1}}\}$. Then
(\ref{ultradif1}) implies
$$
|\partial^{\beta}(a_{\alpha}(x)\partial^{\alpha}\phi(x))|\leq B ||\phi||_{{\E}_{\t, \s,h}(K)}
(c_h C L)^{|\alpha|^{\s}}(2c_hC)^{|\beta|^{\s}}|\beta|^{\t 2^{\s-1}|\beta|^{\s}}.
$$
Choosing $h>0$ (resp. $L>0$) such that $L C c_h< 1/2$,
after summation with respect to $\alpha\in\N^d$,
and by taking suprema with respect to $\beta\in \N^d$ and $x\in K$ it follows
that there exist $C'>0$ such that
$$||P(x,\partial)\phi||_{{\E}_{\t2^{\s-1}, \s,2 C c_h}(K)}\leq C' ||\phi||_{{\E}_{\t, \s,h}(K)}\,$$
which completes the proof.
\end{proof}

It immediately follows that
${\E_{(\infty,\s)}(U)}$ (resp. ${\E_{\{\infty,\s\}}}(U)$) is
closed under the action of
$P(\partial)=\suml_{|\alpha|=0}^{\infty}a_{\alpha}{\partial}^{\alpha}$,
where
$ \displaystyle |a_{\alpha}|\leq A \frac{L^{|\alpha|^{\s}}}{|\alpha|^{\t 2^{\s-1}{|\alpha|}^{\s}}}, $
for some $L>0$ and $A>0$  (resp. every $L>0$ there exists $A>0$).

\section{Microlocal analysis with respect to ${\E_{\t,\s}}(U)$}\label{sec-2}

In this section we define wave front sets which detect singularities that are
"stronger" then classical $C^{\infty}$ singularities and "weaker" then Gevrey type singularities.

In the study of regularity properties (as opposed to the singularity properties)
of a function (or distribution) $u$
we are interested in points
$(x_0,\xi_0)$ in which the decrease of $|\widehat{\phi_N u}(\xi)|$
($\{\phi_N \}_{N\in \N}$ is appropriate sequence of cut-off functions, $\phi_N=1$, $N\in \N$, in a neighborhood if $x_0$)
is faster than $|\xi|^{-N}$ for any $N\in \N,$
and, at the same time, slower than $e^{-|\xi|^{1/t}}$ for any $t>1$,
when $|\xi| \to \infty$ and belongs to an open cone which contains $\xi_0$.
In other words, $u$ is micro-locally more regular than being $C^{\infty}-$regular, but less than being Gevrey
regular.

As a motivation for the definition of wave-front sets in the context of the above mentioned regularity
we observe  the following conditions.

\begin{lema} \label{LemaUslovi}
Let $ t\geq 1 $ and let $\{u_N\}_{N\in \N}$ be a sequence of functions in $C^{\infty}_K$,
such that some of the following conditions hold for every
$ N\in { \N},$ and $\xi\in \Rd\backslash\{0\}$:
\begin{equation}
\label{uslov1}
|\widehat u_N(\xi)|\leq A\, \frac{h^{N^t}\lfloor N^t \rfloor!}{|\xi|^{\lfloor N^t \rfloor}},
\end{equation}
\begin{equation}
\label{uslov2}
|\widehat u_N(\xi)|\leq A\, \frac{h^{N}N!^t }{|\xi|^{ N }},
\end{equation}
\begin{equation}
\label{uslov3}
|\widehat u_N(\xi)|\leq A\, \frac{h^{N} N!^{1/t} }{|\xi|^{\lfloor N^{1/t} \rfloor}},
\end{equation}
for some (different) constants $A,h>0$. Then
$(\ref{uslov1})\Rightarrow (\ref{uslov2})\Rightarrow (\ref{uslov3})\,.$
\end{lema}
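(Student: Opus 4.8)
\emph{Plan.} The key point I would stress at the outset is that a naive term-by-term comparison of the three right-hand sides cannot work: at a fixed frequency $\xi$ the bounds \eqref{uslov1}, \eqref{uslov2}, \eqref{uslov3} carry different powers of $|\xi|$, namely $\lf N^t\rf$, $N$ and $\lf N^{1/t}\rf$, so no inequality between them holds uniformly in $N$ for all $\xi$ (the thresholds in $|\xi|$ would have to grow with $N$). Instead I would prove both implications by \emph{enumeration}, that is, by reindexing the given sequence so that the power of $|\xi|$ supplied by the hypothesis is matched to the one demanded by the conclusion, and only then comparing the factorial numerators via Stirling's formula. Since for $t=1$ all three floors and exponents collapse to $N$ and $N!$, so that the three estimates literally coincide, I would first dispose of that case and assume $t>1$ throughout.

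For \eqref{uslov1}$\Rightarrow$\eqref{uslov2} I would set $n=\lceil M^{1/t}\rceil$ and define the enumerated sequence $v_M:=u_{n}$. Then $n\ge M^{1/t}$ gives $\lf n^t\rf\ge M$, while $n<M^{1/t}+1$ yields the decisive tightness $\lf n^t\rf-M=O(M^{\,1-1/t})$. Applying \eqref{uslov1} to $u_n$ and using $|\xi|^{-\lf n^t\rf}\le|\xi|^{-M}$ for $|\xi|\ge1$, the implication reduces to the numerical inequality
\begin{equation*}
h^{\,n^t}\,\lf n^t\rf!\le C\,(h')^{M}M!^{\,t}.
\end{equation*}
By Stirling this follows from $\lf n^t\rf!=M!\prod_{j=1}^{\lf n^t\rf-M}(M+j)\le M!\,(2M)^{O(M^{1-1/t})}$ together with $M!^{\,t-1}\sim e^{(t-1)(M\log M-M)}$: since $t>1$ the factor $M!^{\,t-1}$ dominates $(2M)^{O(M^{1-1/t})}$, while $h^{\,n^t}\le C_1^{M}$ because $n^t=M+o(M)$, so a suitable $h'$ absorbs all remaining constants.

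The implication \eqref{uslov2}$\Rightarrow$\eqref{uslov3} I expect to be cleaner. Here I would take $K=\lf M^{1/t}\rf$ and $w_M:=u_K$, so that the power produced by \eqref{uslov2}, namely $|\xi|^{-K}=|\xi|^{-\lf M^{1/t}\rf}$, matches \emph{exactly} the power required in \eqref{uslov3}; hence the estimate follows for every $\xi\neq0$ with no auxiliary restriction on $|\xi|$. It then remains to check $h^{K}K!^{\,t}\le C(h')^{M}M!^{1/t}$, and Stirling gives $\log(K!^{\,t})\sim M^{1/t}\log M$ against $\log(M!^{1/t})\sim\tfrac1t M\log M$; as $M^{1/t}=o(M)$ the target side wins, and the powers of $h$ are again harmless.

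I expect the only genuinely delicate point to be the exponent mismatch in the first implication. Because the image of $n\mapsto\lf n^t\rf$ omits infinitely many integers, no enumeration can match the power $|\xi|^{-M}$ exactly for every $M$, and the choice $n=\lceil M^{1/t}\rceil$ that works for $|\xi|\ge1$ overshoots the exponent; consequently the band $0<|\xi|<1$ must be treated separately, where (the target inequality being non-singular there) it suffices to verify, using the hypothesis together with $v_M\in C^{\infty}_K$, that $\|\widehat v_M\|_{\infty}\le A'(h')^{M}M!^{\,t}$. The second implication avoids this difficulty entirely thanks to the exact matching $K=\lf M^{1/t}\rf$, and in both cases the assumption $t\ge1$ is used precisely to ensure that the larger factorial power on the target side dominates.
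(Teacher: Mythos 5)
Your proposal follows essentially the same route as the paper's own proof, which is a two-line application of the enumeration procedure: the enumeration $N\to N^{1/t}$ turns \eqref{uslov1} into the analytic estimate $|\widehat u_N(\xi)|\le A h^N N!/|\xi|^N$, whence \eqref{uslov2} via the trivial inequality $N!\le N!^t$; and the enumeration $N\to N^t$ turns \eqref{uslov3} into $|\widehat u_N(\xi)|\le A h^{N^t}\lfloor N^t\rfloor!^{1/t}/|\xi|^N$, which is then compared with \eqref{uslov2} by Stirling's formula \eqref{niz1}. Your explicit reindexings $v_M=u_{\lceil M^{1/t}\rceil}$ and $w_M=u_{\lfloor M^{1/t}\rfloor}$, together with the factorial comparisons $\lfloor n^t\rfloor!\le M!\,(2M)^{O(M^{1-1/t})}\le C(h')^MM!^t$ and $\log(K!^t)\sim M^{1/t}\log M\ll \tfrac1t M\log M$, are a correctly worked-out version of precisely this; the only stylistic difference is that in the first implication you compare $\lfloor n^t\rfloor!$ with $M!^t$ directly instead of passing through the local analyticity condition as the paper does. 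Your observation that the second implication admits an exact exponent match $K=\lfloor M^{1/t}\rfloor$, while the first cannot because $n\mapsto\lfloor n^t\rfloor$ omits integers, is accurate and is more careful than the paper, whose enumeration convention silently identifies conditions up to exactly this kind of rounding.

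The one point to flag is that the patch you offer for the band $0<|\xi|<1$ in the first implication is not justified as stated. There, the hypothesis \eqref{uslov1} applied to $u_n$ only gives $|\widehat u_n(\xi)|\le A h^{n^t}\lfloor n^t\rfloor!\,|\xi|^{-\lfloor n^t\rfloor}$, which blows up as $\xi\to 0$ strictly faster than the target $|\xi|^{-M}$ (since $\lfloor n^t\rfloor>M$), and membership in $C^{\infty}_K$ alone gives no quantitative control of $\|\widehat v_M\|_{\infty}$ (the hypothesis does not bound $\|v_M\|_{L^1}$ or any norm of $v_M$ itself). So the claimed bound $\|\widehat v_M\|_{\infty}\le A'(h')^M M!^t$ does not follow ``from the hypothesis together with $v_M\in C^{\infty}_K$'' by inspection: you would need an additional argument, e.g.\ a Phragm\'en--Lindel\"of-type lemma stating that an entire function of exponential type determined by $K$ that is bounded by $B$ on the real set $\{|\xi|\ge 1\}$ is bounded by $C_K B$ on the unit ball (applied along real lines through the origin), with $C_K$ independent of $M$; alternatively the issue disappears if one assumes, as the paper does in every actual application of these estimates (Definition \ref{Wf_t_s}, Propositions \ref{dovoljanUslov} and \ref{potrebanUslov}), that $\{u_N\}$ is bounded in $\E'(U)$, which yields a uniform polynomial bound $|\widehat u_N(\xi)|\le C\langle\xi\rangle^{M_0}$ and settles $|\xi|\le 1$ trivially. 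To be fair, the paper's own sketch has the same overshoot $\lfloor a_N^t\rfloor\ge N$ hidden in its enumeration and never addresses small $|\xi|$ at all, so relative to the paper's standard of rigor your argument is complete; but if you want the lemma to hold literally for all $\xi\in\Rd\backslash\{0\}$ and bare sequences in $C^{\infty}_K$, this step must be proved rather than asserted.
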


As mentioned in the introduction, (\ref{uslov2}) is related to the Gevrey wave front $\WF_t$,  $t>1$,
and if $t=1$ then \eqref{uslov1} - \eqref{uslov3}
are related to the analytic wave front set $\WF_A$.

The proof of Lemma \ref{LemaUslovi}
is just an application of the procedure which we call {\em enumeration}
and which consists of  a change of variables in indices which "speeds up" or "slows down"
the decay estimates of single members of the corresponding sequences,
while preserving their asymptotic behavior when $N \rightarrow \infty$. In other words,
although estimates for terms of a sequence before and after
enumeration are different, the  asymptotic behavior of the whole sequence remains unchanged.

In other words, the conditions of the form (\ref{uslov1}), \eqref{uslov2} or \eqref{uslov3}
are equivalent if one is obtained from another one after replacing $N$ with positive, increasing sequence $a_N$
such that $a_N\to \infty$, $N\to \infty$. We call this procedure  {\em enumeration},
and write $N\to a_N$ and $u_N$ instead of $u_{ a_N }$.

Now, for the proof of Lemma \ref{LemaUslovi}, it is enough to note that
after  enumeration $N\to N^{1/t}$, $ t>1$, (\ref{uslov1}) is equivalent to  local analyticity,
and it immediately follows that
$(\ref{uslov1})\Rightarrow (\ref{uslov2})$. Next, after  enumeration
$N\to N^{t}$, $ t>1$,  \eqref{uslov3} is equivalent to
$$
|\widehat u_N(\xi)| \leq A\, \frac{h^{N^t} \lfloor N^t \rfloor!^{1/t} }{|\xi|^N},
\quad N\in { \N},\xi\in \Rd\backslash\{0\},
$$
and  $(\ref{uslov2})\Rightarrow (\ref{uslov3})$ follows from \eqref{niz1}.

Next we introduce new regularity condition and discuss its relation to the
conditions of Lemma \ref{LemaUslovi}.

Let $ \tau >0 $, $ \sigma \geq 1 $ and let $\{u_N\}_{N\in \N}$ be a sequence of compactly supported smooth functions
such that
\begin{equation}
\label{uslovTauSigma}
|\widehat u_N(\xi)|\leq A\,\frac{h^{N} N!^{1/\s} }{|\xi|^{\lfloor (N/\t)^{1/\s} \rfloor}},
\quad N\in { \N},\,\xi\in \Rd\backslash\{0\}
\end{equation}
for some constants $A,h>0$.
Note that from after enumeration $N\to \t N^{\s}$, \eqref{niz1} implies that
(\ref{uslovTauSigma}) is equivalent to
$$
|\widehat u_{N}(\xi)|\leq A\, \frac{h^{N^{\s}} N^{\t N^{\s}} }{|\xi|^{N}}, \quad N\in { \N},\,\xi\in \Rd\backslash\{0\},
$$
and from $N!^{\s} \leq C N^{\t N^{\s}}$ it follows that
$(\ref{uslov2})\Rightarrow (\ref{uslovTauSigma})$.
Note that
$ (\ref{uslov3})  \Leftrightarrow (\ref{uslovTauSigma}) $
when $ \tau = 1 $, while
$ (\ref{uslovTauSigma}) \Rightarrow (\ref{uslov3})$ when $\tau \in (0,1)$.

We conclude that $(\ref{uslovTauSigma})$ describes regularity weaker than
$(\ref{uslov2})$ and stronger than $  (\ref{uslov3})$.

After applying Stirling's formula and enumeration $N\to N/\t$ to
\begin{equation}
\label{uslov3'}
|\widehat u_N(\xi)|\leq A\, \frac{h^{N} N!^{\t/\s}}{|\xi|^{\lfloor N^{1/\s} \rfloor}},
\quad N\in { \N},\,\xi\in \Rd\backslash\{0\}
\end{equation}
where $A,h >0$, we obtain
$$
|\widehat u_N(\xi)|\leq A\, \frac{h^{N/\t} (N/\t)^{\frac{\t}{\s} (N/\t)} }{|\xi|^{\lfloor (N/\t)^{1/\s} \rfloor}}
\leq B\, \frac{k^{N} N!^{1/\s}}{|\xi|^{\lfloor (N/\t)^{1/\s} \rfloor}},
\quad N\in { \N},\,\xi\in \Rd\backslash\{0\},
$$
for some $A,B,h,k>0$, so that
\eqref{uslov3'} is equivalent to \eqref{uslovTauSigma}.

This discussion motivates the use of  \eqref{uslovTauSigma} (or \eqref{uslov3'})
in the definition of a new type of wave front sets of distributions, see Definition \ref{Wf_t_s}.

\subsection{$\t,\s$-admissible sequences and local regularity of Gevrey ultradistributions}\label{NewTypeSekcija}
An essential tool in our study is the use of carefully chosen
sequences of cut-off functions, defined as follows.

\begin{de}
\label{definicijaNiza}
Let $\t>0$, $\s>1$, and $\Omega\subseteq K\subset\subset U$,
such that $\overline{\Omega} $ is strictly contained in $ K$.
A sequence $\{\chi_N\}_{N\in \N}$ of functions in $C^{\infty}_K$
is said to be {\em  $\t,\s$-admissible  with respect to $K$} if
\begin{itemize}
\item[a)] $\chi_N=1$ in a neighborhood of $\Omega$, for every $N\in \N$,
\item[b)] there exists a positive sequence $C_{\beta}$ such that
\begin{equation}
\label{ocenaNiz}
\sup_{x\in K}
|D^{\alpha+\beta}\chi_{N}(x)|\leq C_{\beta}^{|\alpha|+1}
\lfloor N^{1/\s}\rfloor ^{|\alpha|},
\quad |\alpha|\leq
\lfloor (N/\t)^{1/\s}  \rfloor,
\end{equation} for every $N \in \N$ and  $\beta\in {\N}^d$.
\end{itemize}
\end{de}

When  $\t=\s=1$ we recover the sequence
used by H\"ormander in the study of the analytic behavior of distributions.

Although the following Lemma is
a consequence of \cite[Theorems 1.3.5 and 1.4.2]{HermanderKnjiga}
we give its proof since it contains an important construction which will be used in the sequel.

\begin{lema}
\label{ogranicenostNiza}
Let there be given $r>0$, $\t>0$, $\s>1$ and $x_0\in \Rd$.
There exists $\t,\s$-admissible sequence $\{\chi_N\}_{N\in \N}$ with respect to
$\overline{B_{2r}(x_0)}$ such that $\chi_N=1$ on $B_{r}(x_0)$, for every $N\in \N$.
\end{lema}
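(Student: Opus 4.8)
The plan is to build each $\chi_N$ as a convolution $\eta\ast\Phi_N$ of one \emph{fixed} smooth cut-off $\eta$ with an $N$-dependent approximate identity $\Phi_N$; this is the modification of the construction behind \cite[Theorems 1.3.5 and 1.4.2]{HermanderKnjiga} that is forced by the extra multi-index $\beta$ in Definition \ref{definicijaNiza}. First I would fix, once and for all, $\eta\in C^\infty_{\overline{B_{2r}(x_0)}}$ with $0\le\eta\le1$, $\eta\equiv 1$ on $B_{3r/2}(x_0)$ and $\operatorname{supp}\eta\subset B_{7r/4}(x_0)$, and set $C_\beta:=\sup_{x}|D^\beta\eta(x)|<\infty$ for $\beta\in\mathbf N^d$. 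Independently I fix $\phi\ge 0$, $\phi\in C^\infty_0(B_1(0))$, $\int\phi\,dx=1$, put $\rho:=r/8$ and $n_N:=\lfloor (N/\t)^{1/\s}\rfloor$, and define the rescaled bump $\phi^{(N)}(x):=(n_N/\rho)^d\phi(n_Nx/\rho)$, supported in $B_{\rho/n_N}(0)$ with integral $1$. The approximate identity is the $n_N$-fold convolution $\Phi_N:=\phi^{(N)}\ast\cdots\ast\phi^{(N)}$, so that $\Phi_N\ge 0$, $\operatorname{supp}\Phi_N\subset B_\rho(0)$ and $\int\Phi_N\,dx=1$; for the (finitely many) $N$ with $n_N=0$ one reads $\Phi_N=\delta$, i.e.\ $\chi_N:=\eta$. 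Finally set $\chi_N:=\eta\ast\Phi_N$.

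The decisive ingredient is an $L^1$-estimate for the derivatives of $\Phi_N$ valid \emph{precisely} in the range required by \eqref{ocenaNiz}. For $|\alpha|\le n_N$ I would write $\alpha$ as a sum of $|\alpha|$ first-order multi-indices and move each of them onto a distinct factor of $\Phi_N$ (there are $n_N\ge|\alpha|$ factors available); together with $\|f\ast g\|_{L^1}\le\|f\|_{L^1}\|g\|_{L^1}$, $\|\phi^{(N)}\|_{L^1}=1$ and the scaling identity $\|\partial_k\phi^{(N)}\|_{L^1}=(n_N/\rho)\|\partial_k\phi\|_{L^1}$, this yields
\[
\|D^\alpha\Phi_N\|_{L^1}\le (C_1 n_N/\rho)^{|\alpha|},\qquad |\alpha|\le n_N,\quad C_1:=\max_k\|\partial_k\phi\|_{L^1}.
\]
No summation over Leibniz terms is needed, since a single admissible distribution of the derivatives already gives the bound, and the admissibility range $|\alpha|\le\lfloor (N/\t)^{1/\s}\rfloor$ is exactly the range in which this one-derivative-per-factor argument applies.

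Property a) then reduces to two inclusions that follow from $\operatorname{supp}\Phi_N\subset B_\rho(0)$ and the choices above: since $\eta\equiv1$ on $B_{3r/2}(x_0)$ one gets $\chi_N\equiv1$ on $B_{3r/2-\rho}(x_0)=B_{11r/8}(x_0)\supset \overline{B_r(x_0)}$, and since $\operatorname{supp}\eta\subset B_{7r/4}(x_0)$ one gets $\operatorname{supp}\chi_N\subset B_{7r/4+\rho}(x_0)=B_{15r/8}(x_0)\subset B_{2r}(x_0)$; in particular $\chi_N\in C^\infty_{\overline{B_{2r}(x_0)}}$ and $\chi_N=1$ on $B_r(x_0)$. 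For property b) the key is to split the derivative across the convolution so that the \emph{free} index $\beta$ falls on $\eta$ and only $\alpha$ falls on $\Phi_N$:
\[
D^{\alpha+\beta}\chi_N=(D^\beta\eta)\ast(D^\alpha\Phi_N),
\]
whence, for $|\alpha|\le n_N$,
\[
\sup_{x}|D^{\alpha+\beta}\chi_N(x)|\le \|D^\beta\eta\|_{L^\infty}\,\|D^\alpha\Phi_N\|_{L^1}\le C_\beta\,(C_1 n_N/\rho)^{|\alpha|}.
\]
Since $n_N\le \t^{-1/\s}N^{1/\s}\le C_2\lfloor N^{1/\s}\rfloor$ for a constant $C_2$ depending only on $\t,\s$ (for all $N\ge1$), this is at most $\widetilde C_\beta^{\,|\alpha|+1}\lfloor N^{1/\s}\rfloor^{|\alpha|}$ with $\widetilde C_\beta:=\max\{C_\beta,\,C_1C_2/\rho\}$, which is exactly \eqref{ocenaNiz} for the positive sequence $\widetilde C_\beta$.

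The point that makes an ordinary cut-off insufficient, and hence the main obstacle, is the $\beta$-uniformity built into \eqref{ocenaNiz}: taking $\alpha=0$ there forces $\sup_N\sup_x|D^\beta\chi_N|<\infty$ for every fixed $\beta$, whereas convolving the \emph{indicator} of a ball with the concentrating $\Phi_N$ (as in H\"ormander's analytic construction) makes \emph{every} derivative cost a factor $\sim n_N/\rho\to\infty$, so that already the first-order derivatives are unbounded in $N$. Pre-convolving with the fixed smooth $\eta$ is precisely what decouples the rate-carrying derivatives, absorbed by $\Phi_N$ and producing the factor $\lfloor N^{1/\s}\rfloor^{|\alpha|}$, from the uniformly bounded low-order derivatives, absorbed by $\eta$. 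Once this splitting is in place, the only genuinely computational points are the one-derivative-per-factor bound on $\|D^\alpha\Phi_N\|_{L^1}$ in the range $|\alpha|\le n_N$ and the comparison $n_N\le C_2\lfloor N^{1/\s}\rfloor$.
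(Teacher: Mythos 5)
Your proposal is correct and takes essentially the same route as the paper: the paper sets $\chi_N=\theta\ast\widetilde{\chi_N}$ with a fixed mollifier $\theta$ absorbing the $\beta$-derivatives, where $\widetilde{\chi_N}$ comes from H\"ormander's Theorem 1.4.2 with equal step lengths $d_k=\frac{r}{4\lfloor (N/\t)^{1/\s}\rfloor}$, and that theorem is internally exactly your $n_N$-fold convolution of rescaled bumps with the one-derivative-per-factor bookkeeping in the range $|\alpha|\le\lfloor (N/\t)^{1/\s}\rfloor$. Up to reassociating the convolution (your fixed smooth $\eta$ playing the role of $\theta$ convolved with the indicator of an intermediate ball), the two constructions coincide, so there is nothing to correct.
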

\begin{proof}
Fix $r>0$. Let $\dss d_{k}=\frac{r}{4\lfloor (N/\t)^{1/\s}  \rfloor}$,
$\dss k\leq \lfloor (N/\t)^{1/\s}  \rfloor$, $N\in \N$. Note that
$$
\suml_{k=1}^{\lfloor (N/\t)^{1/\s}\rfloor} d_k = \frac{r}{4}<\frac{r}{2},
$$ for every $N\in \N$.

Since the infimum of distances between points in $\overline{B_{5r/4}(x_0)}$ and $\Rd\backslash B_{7r/4}(x_0)$
is $r/2$, from \cite[Theorem 1.4.2]{HermanderKnjiga} it follows that for every $N\in \N$
there exists a smooth function $\widetilde{\chi_N}$ such that
$\supp \widetilde{\chi_N} \subseteq B_{7r/4}(x_0)$, $\widetilde{\chi}_N=1$ on $B_{5r/4}(x_0)$,
and
\begin{equation}
\label{defChiNBezalfa}
\sup_{x\in K}
|D^{\alpha}\widetilde{\chi_{N}}(x)|\leq A^{|\alpha|}\prod_{k=1}^{|\alpha|}d_k=
A^{|\alpha|} \lfloor (N/\t)^{1/\s}\rfloor ^{|\alpha|}\leq
C^{|\alpha|} \lfloor N^{1/\s}\rfloor ^{|\alpha|} ,
\end{equation}
for $|\alpha|\leq \lfloor (N/\t)^{1/\s}\rfloor$, $ N \in \N$, where $C>0$ depends on $\t$ and $\s$.

Next, let $\theta $ be a non-negative function such that $\theta\in C_0^{\infty} (B_{r/4}(x_0))$ and
$ \int \theta (x) dx = 1$.
Then
$\chi_N=\theta*\widetilde{\chi_N}$ clearly satisfies (\ref{ocenaNiz}) for every $N\in \N$,
if we let $\beta$ derivatives act on $\theta$ and $\alpha$ derivatives act on $\widetilde{\chi_N}$.
Hence $\{\chi_N\}_{N\in \N}$ is
a $\t, \s$-admissible sequence with respect to $\overline{B_{2r}(x_0)}$ and the lemma is proved.
\end{proof}

\begin{rem}
\label{PaleyRemark}
Note that if $\alpha=0$ in \eqref{ocenaNiz}, then  $\{\chi_N\}_{N\in \N}$ is a bounded
sequence in $C^{\infty}(U)$. Moreover, by standard calculations we have that
\be
\label{chiNPaley}
|\widehat \chi_N (\xi)|\leq A_{\beta}^{|\alpha|+1}\lfloor N^{1/\s}\rfloor ^{|\alpha|}\langle\xi\rangle^{-|\alpha|-|\beta|},\quad
|\alpha|\leq
\lfloor (N/\t)^{1/\s}  \rfloor,
\ee
for every $N \in \N, \xi\in \Rd$, where $\langle\xi\rangle=(1+|\xi|^2)^{1/2}$.
Therefore, if $u\in \D'(U)$, the sequence $\{\chi_N u\}_{N\in \N}$ is bounded in $\E'(U)$.
\end{rem}

Local regularity in $ \E_{\{\t,\s\}}(U)$ is in fact determined by (\ref{uslov3'}) as follows.

\begin{prop}
\label{dovoljanUslov}
Let $u \in \D'(U)$, and let $\{u_N\}_{N\in \N}$ be a bounded sequence in $\E'(U)$,
$u_N=u$ on $\Omega$ and  such that \eqref{uslov3'}
holds for $\t>0$ and $\s >1$. Then $u\in \E_{\{\t,\s\}}(\Omega)$.
\end{prop}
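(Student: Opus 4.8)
The plan is to recover the pointwise derivative bounds defining $\E_{\{\t,\s\}}(\Omega)$ directly from the Fourier-side estimate \eqref{uslov3'}, by using Fourier inversion and, for each multi-index $\alpha$, choosing an index $N=N(\alpha)$ tuned so that the decay exponent $\lf N^{1/\s}\rf$ just exceeds $|\alpha|+d$. Concretely, I would take $N=N(\alpha)$ to be the least integer with $\lf N^{1/\s}\rf\geq |\alpha|+d+1$, so that $N$ stays comparable to $|\alpha|^{\s}$ while the integrals below converge. Since $u_N\in\E'(U)$, its Fourier transform is entire (Paley--Wiener--Schwartz), and \eqref{uslov3'} with this $N$ makes $\xi\mapsto\xi^{\alpha}\widehat{u_N}(\xi)$ integrable; Fourier inversion then represents $u_N$, and hence $u$ (as $u_N=u$ on $\Omega$), as a $C^{|\alpha|}$ function on $\Omega$ with
$$\partial^{\alpha}u(x)=(2\pi i)^{|\alpha|}\intl_{\Rd}\xi^{\alpha}\widehat{u_N}(\xi)e^{2\pi i x\xi}\,d\xi,\qquad x\in\Omega.$$
Letting $\alpha$ range over all multi-indices shows $u\in C^{\infty}(\Omega)$, and it remains to estimate this integral.

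Next I would split $\intl_{\Rd}|\xi|^{|\alpha|}|\widehat{u_N}(\xi)|\,d\xi$ into the regions $|\xi|\leq 1$ and $|\xi|>1$. On $|\xi|\leq 1$, boundedness of $\{u_N\}$ in $\E'(U)$ gives, again via Paley--Wiener--Schwartz, a uniform polynomial bound $|\widehat{u_N}(\xi)|\leq C(1+|\xi|)^{m}$, so this part contributes at most a constant, which together with the prefactor $(2\pi)^{|\alpha|}$ is harmless. On $|\xi|>1$, the estimate \eqref{uslov3'} yields
$$\intl_{|\xi|>1}|\xi|^{|\alpha|}|\widehat{u_N}(\xi)|\,d\xi\leq A\,h^{N}N!^{\t/\s}\intl_{|\xi|>1}|\xi|^{|\alpha|-\lf N^{1/\s}\rf}\,d\xi,$$
and since $\lf N^{1/\s}\rf-|\alpha|\geq d+1$ the remaining integral converges and is bounded by a constant depending only on $d$, uniformly in $\alpha$.

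The heart of the argument, which I expect to be the main obstacle, is then the estimate of the factor $h^{N}N!^{\t/\s}$ for $N\sim|\alpha|^{\s}$. Here I would invoke Stirling's formula in the precise form \eqref{niz1}: with $p\approx|\alpha|+d+1$ one has $N!^{\t/\s}\lesssim \lf p^{\s}\rf!^{\t/\s}\sim C\,p^{\t/2}e^{-(\t/\s)p^{\s}}\,p^{\t p^{\s}}$, whence $h^{N}N!^{\t/\s}\leq A'\,H^{|\alpha|^{\s}}(|\alpha|+d+1)^{\t(|\alpha|+d+1)^{\s}}$ for some $H>0$. A short logarithmic computation shows $(|\alpha|+d+1)^{\t(|\alpha|+d+1)^{\s}}\leq H_0^{|\alpha|^{\s}}|\alpha|^{\t|\alpha|^{\s}}$: the discrepancy between the two exponents is of order $|\alpha|^{\s-1}\log|\alpha|=o(|\alpha|^{\s})$, hence absorbable into a constant raised to the power $|\alpha|^{\s}$. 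This is exactly the quantitative content of the enumeration $N\to\t N^{\s}$ discussed in Section \ref{sec-2}, and it is the regime $N\sim|\alpha|^{\s}$ forced by $\s>1$ that makes $h^{N}$ compatible with the target factor carrying the exponent $|\alpha|^{\s}$.

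Collecting the two frequency ranges, and using $C^{|\alpha|}\leq C^{|\alpha|^{\s}}$ for $C\geq 1$ (valid since $\s>1$) to absorb the $(2\pi)^{|\alpha|}$ prefactor, I obtain a bound of the form $|\partial^{\alpha}u(x)|\leq A''\,(H'')^{|\alpha|^{\s}}|\alpha|^{\t|\alpha|^{\s}}$, holding uniformly for $x\in\Omega$ and all $\alpha\in\N^{d}$. By Remark \ref{EkvivalentneNorme} this is precisely the statement that $u\in\E_{\{\t,\s\}}(K)$ for every $K\subset\subset\Omega$, that is, $u\in\E_{\{\t,\s\}}(\Omega)$, completing the proof.
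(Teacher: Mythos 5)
Your proof is correct and follows essentially the same route the paper indicates for this proposition (whose detailed proof it omits): Paley--Wiener bounds and Fourier inversion, a split of the frequency integral at $|\xi|=1$, and an index $N=N(\alpha)\sim(|\alpha|+d+1)^{\s}$ tuned so that $\lf N^{1/\s}\rf$ exceeds $|\alpha|+d$, after which Stirling's formula \eqref{niz1} converts $h^{N}N!^{\t/\s}$ into $H^{|\alpha|^{\s}}|\alpha|^{\t|\alpha|^{\s}}$. Your logarithmic absorption of the shift from $|\alpha|+d+1$ to $|\alpha|$ is exactly the role the paper assigns to the property $\widetilde{(M.2)'}$, so nothing is missing.
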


We omit the proof since it uses
standard arguments based on the Paley-Wiener theorem, the Fourier inversion
formula and suitable decomposition of the domain of integration in combination with
the property $\widetilde{(M.2)'}$. We refer to \cite{HermanderKnjiga} and
\cite{Rodino} for details.

For the opposite direction, if $u\in \E_{\{\t,\s\}}(\Omega)$ then
we use $ \t^{\s/(\s -1)}, \s $-admissible sequences instead.

\begin{prop}
\label{potrebanUslov}
Let $\Omega\subseteq K\subset\subset U$, $\overline{\Omega}$ strictly contained in $ K$,
$u\in \D'(U)$, and let $\{\chi_N\}_{N\in \N}$ be the ${\tilde\t},\s$-admissible sequence with respect to $K$, where ${\tilde\t}= \t^{\s/(\s -1)}$, $\t>0 $, $\s>1$ .
If $u\in \E_{\{\t,\s\}}(\Omega)$,
then $\{\chi_N u\}_{N\in \N}$ is bounded in $\E'(U)$,
${\chi}_N u=u$ on $\Omega$, and
\begin{equation}
\label{uslov2ro}
|\widehat{\chi_N u}(\xi)|\leq
A \frac{h^{N} N!^{{\tilde\t}^{-1/\s}/{\s}} }{|\xi|^{\lfloor ({N/{\tilde\t}})^{1/\s} \rfloor}}, \quad N\in { \N},\,\xi\in \Rd\backslash\{0\}.
\end{equation} That is, after enumeration $N\to {\tilde \t}N$, $\{\chi_N u\}_{N\in \N}$
satisfies \eqref{uslov3'}
for some $A,h>0$.
\end{prop}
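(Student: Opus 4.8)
The plan is as follows. The first two assertions are essentially immediate: since $\chi_N=1$ in a neighborhood of $\Omega$ by part a) of Definition \ref{definicijaNiza}, we have $\chi_N u=u$ on $\Omega$ for every $N$, and the boundedness of $\{\chi_N u\}_{N\in\N}$ in $\E'(U)$ was already recorded in Remark \ref{PaleyRemark} (taking $\alpha=0$ in \eqref{ocenaNiz} shows $\{\chi_N\}_{N\in\N}$ is bounded in $C^\infty(U)$, hence $\{\chi_N u\}_{N\in\N}$ is a bounded family of compactly supported distributions of fixed order). The real content is the decay estimate \eqref{uslov2ro}, and I would proceed through integration by parts combined with the Leibniz rule.

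To prove \eqref{uslov2ro}, I would fix $\xi\neq 0$ and set $n=\lfloor (N/\tilde\t)^{1/\s}\rfloor$. Choosing a coordinate $j$ with $|\xi_j|=\max_k|\xi_k|\ge |\xi|/\sqrt d$ and integrating by parts $n$ times in $x_j$, the normalization used here gives $(2\pi\xi_j)^n\,\widehat{\chi_N u}(\xi)=\widehat{\partial_j^{\,n}(\chi_N u)}(\xi)$, so that
\[
|\widehat{\chi_N u}(\xi)|\le d^{\,n/2}(2\pi|\xi|)^{-n}\,\|\partial_j^{\,n}(\chi_N u)\|_{L^1}.
\]
Expanding $\partial_j^{\,n}(\chi_N u)=\sum_{k=0}^n\binom{n}{k}\partial_j^{\,n-k}\chi_N\,\partial_j^{\,k}u$, I would bound the two factors on $\supp\chi_N$ separately: the cut-off factor by the admissibility estimate \eqref{ocenaNiz} written with $\tilde\t$ in place of $\t$, which applies because $n-k\le n\le\lfloor (N/\tilde\t)^{1/\s}\rfloor$ and yields $|\partial_j^{\,n-k}\chi_N|\le C^{\,n-k+1}\lfloor N^{1/\s}\rfloor^{\,n-k}$, and the factor $\partial_j^{\,k}u$ by the Gevrey-type bound $|\partial_j^{\,k}u|\le A\,h^{k^\s}k^{\t k^\s}$ valid on the compact support by the hypothesis $u\in\E_{\{\t,\s\}}$.

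Substituting these bounds, $\|\partial_j^{\,n}(\chi_N u)\|_{L^1}$ is dominated by a constant times $\sum_{k=0}^n\binom{n}{k}(C\lfloor N^{1/\s}\rfloor)^{\,n-k}h^{k^\s}k^{\t k^\s}$. Since $m\mapsto h^{m^\s}m^{\t m^\s}$ grows faster than any geometric factor, the index $k=n$ maximizes the summand, and summing the binomial coefficients contributes only a harmless factor $2^n$; thus the sum is controlled by $C_1^{\,n}h^{n^\s}n^{\t n^\s}$. Recalling $n=\lfloor (N/\tilde\t)^{1/\s}\rfloor$, so that $n^\s\approx N/\tilde\t$, I would finally invoke Stirling's formula \eqref{niz1} to rewrite $n^{\t n^\s}$. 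The whole point of the choice $\tilde\t=\t^{\s/(\s-1)}$ is that it forces the exponent identity $\t/\tilde\t=\tilde\t^{-1/\s}$, which converts $n^{\t n^\s}$ into $C_2^{\,N}N!^{\tilde\t^{-1/\s}/\s}$ up to factors absorbed into the constants; together with the prefactor $|\xi|^{-n}=|\xi|^{-\lfloor (N/\tilde\t)^{1/\s}\rfloor}$ this is exactly \eqref{uslov2ro}.

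The final claim, that the enumeration $N\to\tilde\t N$ takes \eqref{uslov2ro} into \eqref{uslov3'}, then follows by direct substitution: $\lfloor(\tilde\t N/\tilde\t)^{1/\s}\rfloor=\lfloor N^{1/\s}\rfloor$, and by Stirling together with the same exponent identity $(\tilde\t N)!^{\tilde\t^{-1/\s}/\s}$ is comparable to $N!^{\t/\s}$. The main obstacle I expect is precisely the bookkeeping in this last step: one must verify carefully, through Stirling's formula, that the twofold rescaling encoded in $\tilde\t=\t^{\s/(\s-1)}$ reproduces \emph{exactly} the Gevrey exponent $\t/\s$ appearing in \eqref{uslov3'}, rather than an equivalent-looking but genuinely different power of $N!$; checking that the $k=n$ term really dominates the Leibniz sum is routine by comparison, but the matching of the two exponents is where the specific value of $\tilde\t$ is indispensable.
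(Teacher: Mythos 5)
Your proposal is correct and follows essentially the paper's own route: the authors omit the proof, deferring to the necessity argument of H\"ormander's Proposition 8.4.2, which is exactly your scheme of repeated integration by parts in one coordinate, the Leibniz expansion of $\partial_j^{\,n}(\chi_N u)$ with $n=\lfloor (N/{\tilde\t})^{1/\s}\rfloor$ estimated through the admissibility bound \eqref{ocenaNiz} and the $\E_{\{\t,\s\}}$ bounds on $u$, and your two exponent identities ${\tilde\t}^{-1/\s}=\t/{\tilde\t}$ and ${\tilde\t}^{(\s-1)/\s}=\t$ are precisely what the choice ${\tilde\t}=\t^{\s/(\s-1)}$ is engineered to produce, so the bookkeeping you flagged as the main risk does close (dominance of the $k=n$ term follows rigorously from convexity of $k\mapsto n-k+\t k^{\s}$). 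One caveat you silently inherit from the statement itself: the Leibniz step requires the $\E_{\{\t,\s\}}$ bounds for $u$ on all of $\supp\chi_N\subseteq K$, not merely on $\Omega$, so (as in H\"ormander) the hypothesis must be read as regularity of $u$ on a neighborhood of $K$.
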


The proof is rather technical and follows the same idea as in
\cite[Proposition 8.4.2]{HermanderKnjiga}. We therefore omit it.

Note that for $\t=\s=1$ Proposition \ref{potrebanUslov} coincides with the necessity part of \cite[Proposition 8.4.2.]{HermanderKnjiga}.

\subsection{Singular support and  $\WF_{\t,\s}$ related to the classes $\E_{\t,\s}$}\label{singularsuport}

In this section we introduce wave front set ${\WF}_{\{\t,\s\}}(u)$,
and prove the corresponding results related to singular support.
We also discuss the wave-front set ${\WF}_{(\t,\s)}(u)$.

\begin{de}
\label{Wf_t_s}
Let $\t>0$ and $\s>1$, $u \in \D'(U)$, and $(x_0,\xi_0)\in U\times\Rd\backslash\{0\}$.
Then $(x_0,\xi_0)\not \in {\WF}_{\{\t,\s\}}(u)$ (resp. ${\WF}_{(\t,\s)}(u)$) if there exists
open neighborhood $\Omega \subset U$ of $x_0$,
a conic neighborhood $\Gamma$ of $\xi_0$,
and a bounded sequence
$\{u_N\}_{N\in \N}$ in $\E'(U)$ such that $u_N=u$ on $\Omega$ and
\eqref{uslov3'} holds
for some constants $A,h>0$ (resp. for every $h>0$ there exists $A>0$).
\end{de}

\begin{rem}
It follows immediately from the definition that ${\WF}_{\{\t,\s\}}(u)$, $u\in \D'(U)$,
is closed subset of $U\times\Rd\backslash\{0\}$. Note that for $\t>0$ and $\s>1$
$$
{\WF}_{\{\t,\s\}}(u)\subseteq {\WF}_{\s}(u) \subseteq {\WF}_{\{1,1\}}(u)={\WF}_A (u),
$$
where $ {\WF}_{\s}(u)  $ is the Gevrey wave-front set.
Moreover, when $ 0<\t<1$ and $\s = 1$ we have $ {\WF}_A (u) \subseteq {\WF}_{\{\t,1\}}(u),$ and $\WF_{\{\t,\s\}}(u)\not=\WF_L(u)$ for any choice of $\t>0$, $\s>1$, where $\WF_L(u)$ is given in the introduction.

Since Proposition \ref{potrebanUslov} does not hold when $0<\t<1$ and $\s=1$, we are not able to prove the usual relation between ${\WF}_{\{\t,1\}}(u)$
and the singular support of $u$, see Theorem \ref{SingularSupportTheorema}.
This suggests that the singularities related to $WF_{\{\t,1\}}$
should be studied by a different approach (see \cite{Pilipovic-Toft}).
\end{rem}

The singular support of a  distribution with respect to classes $\E_{\{\t,\s\}}$ can be defined in a usual manner.

\begin{de}
\label{DefinicijaSingSup}
Let $\t>0$, $\s>1$,  $u\in\D'(U)$  and $x_0\in U$.
Then $x_0\not \in \sing_{\{\tau,\s\}}(u)$ if and only if
there exists a neighborhood $\Omega$ of $x_0$ such that $u\in \E_{\{\t,\s\}}(\Omega)$.
\end{de}

The following lemma is an essential result on microlocal regularity,
which will be used in the proof of Theorem \ref{SingularSupportTheorema}.

\begin{lema}
\label{Singsuplema}
Let $\t>0$, $\s>1$,  $u\in\D'(U)$, $K\subset\subset U$,
and let $\{\chi_N\}_{N\in \N}$ be a ${\tilde \t}, \s$-admissible sequence with respect to $K$ with ${\tilde\t}= \t^{\s/(\s -1)}$.
Then $\{\chi_N u\}_{N\in \N}$ is a bounded sequence in $\E'(U)$,
and if ${\WF}_{\{\t,\s\}}(u) \cap (K\times F)=\emptyset$,
where $F$ is a closed cone, then there exist $A,h>0$ such that
\be
\label{SingsuplemaUslov}
|\widehat{\chi_N u}(\xi)|\leq
A \frac{h^{N} N!^{{\tilde\t}^{-1/\s}/{\s}} }{|\xi|^{\lfloor ({N/{\tilde\t}})^{1/\s} \rfloor}},
\quad N\in {\N}\,,\xi\in F\,.
\ee
\end{lema}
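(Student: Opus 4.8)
The statement I need to prove is Lemma \ref{Singsuplema}, which asserts a microlocal regularity estimate: given that $\WF_{\{\t,\s\}}(u)$ misses the set $K \times F$ (with $F$ a closed cone), the sequence $\chi_N u$ obtained from the ${\tilde\t},\s$-admissible sequence satisfies the decay estimate \eqref{SingsuplemaUslov} \emph{uniformly} on the whole closed cone $F$, for some single pair of constants $A,h>0$.

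The plan is to leverage the definition of the wave front set together with a compactness argument. The hypothesis $\WF_{\{\t,\s\}}(u) \cap (K\times F)=\emptyset$ means that for each point $(x_0,\xi_0) \in K\times F$ there is, by Definition \ref{Wf_t_s}, an open neighborhood $\Omega_{x_0}$, a conic neighborhood $\Gamma_{\xi_0}$, and a bounded sequence realizing the estimate \eqref{uslov3'} on $\Gamma_{\xi_0}$ with its own constants. The core issue is that these are \emph{local} data with point-dependent constants, whereas the conclusion demands a single uniform estimate over all of $F$.

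\textbf{The main steps.} First I would reduce the cone $F$ to its intersection with the unit sphere, $F \cap S^{d-1}$, which is compact, and cover $K\times (F\cap S^{d-1})$ by finitely many product sets $\Omega_{x_j} \times \Gamma_{\xi_j}$ coming from the wave-front hypothesis. Taking $\Omega = \bigcap_j \Omega_{x_j}$ (a neighborhood containing the relevant part of $K$ after possibly shrinking) and using that the $\Gamma_{\xi_j}$ cover $F$, I extract a finite family of bounded sequences, each satisfying \eqref{uslov3'} on its own cone. Next, on each cone $\Gamma_{\xi_j}$, Proposition \ref{potrebanUslov} guarantees that the ${\tilde\t},\s$-admissible cut-off applied to $u$ produces exactly the estimate of the form \eqref{uslov2ro}, which after enumeration $N\to{\tilde\t}N$ is \eqref{uslov3'}. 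The delicate point is to pass from the local sequences $u_N^{(j)}$ furnished by the wave-front hypothesis to the single fixed sequence $\chi_N u$: here I would use that $\chi_N u = \chi_N u_N^{(j)}$ on $\Omega$ (since $u_N^{(j)}=u$ there and $\chi_N$ is supported near $x_0$), so that on each $\Gamma_{\xi_j}$ the Fourier transform $\widehat{\chi_N u}$ inherits the decay of $\widehat{u_N^{(j)}}$ via the convolution estimate \eqref{chiNPaley} from Remark \ref{PaleyRemark}. Concretely, $\widehat{\chi_N u} = \widehat{\chi_N} * \widehat{u_N^{(j)}}$, and I would split the convolution integral into the region where the integration variable is comparable to $\xi$ (controlled by the decay of $\widehat{u_N^{(j)}}$ on $\Gamma_{\xi_j}$) and the complementary region (controlled by the rapid decay of $\widehat{\chi_N}$, using that $\chi_N$ is ${\tilde\t},\s$-admissible so that \eqref{chiNPaley} gives many derivatives' worth of decay). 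Finally, taking the maximum of the finitely many constants $A_j, h_j$ yields the single pair $A,h$ valid uniformly on $F$.

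\textbf{The main obstacle.} The hard part will be the convolution estimate that transfers decay from $\widehat{u_N^{(j)}}$ to $\widehat{\chi_N u}$ uniformly in $N$, while respecting the $N$-dependent number of available derivatives $\lfloor (N/{\tilde\t})^{1/\s}\rfloor$ in \eqref{chiNPaley}. One must choose, for each $\xi \in \Gamma_{\xi_j}$ with $|\xi|$ large, an order of differentiation of $\chi_N$ that is as large as the admissibility condition permits, and verify that the resulting bound still matches the target power $|\xi|^{-\lfloor (N/{\tilde\t})^{1/\s}\rfloor}$ with the correct factorial growth $N!^{{\tilde\t}^{-1/\s}/\s}$. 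This balancing is exactly where the specific form of the admissibility condition \eqref{ocenaNiz}, with the factor $\lfloor N^{1/\s}\rfloor^{|\alpha|}$ rather than $|\alpha|^{|\alpha|}$, is essential; a naive choice of derivative order would produce superfluous growth and destroy the estimate. I expect the technical heart to mirror the region-splitting argument in \cite[Section 8.4]{HermanderKnjiga}, adapted so that the enumeration $N\to{\tilde\t}N$ and the two-parameter factorial bookkeeping come out consistently.
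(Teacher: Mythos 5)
Your overall strategy --- compactness of $K\times(F\cap S^{d-1})$, transfer of decay from the local sequences $u_N^{(j)}$ furnished by Definition \ref{Wf_t_s} to $\chi_N u$ via splitting the convolution $\widehat{\chi_N}\ast\widehat{u_N^{(j)}}$, with the admissibility bound \eqref{chiNPaley} supplying $\lfloor (N/\tilde\t)^{1/\s}\rfloor$ derivatives' worth of decay, followed by the enumeration bookkeeping --- is exactly the route the paper intends: the paper omits the details and refers to the proof of H\"ormander's Lemma 8.4.4, of which your outline is the $\t,\s$-adapted version. However, there is a genuine gap in your treatment of the spatial part of the covering argument. You set $\Omega=\bigcap_j \Omega_{x_j}$ and claim $\chi_N u=\chi_N u_N^{(j)}$ on $\Omega$ ``since $u_N^{(j)}=u$ there and $\chi_N$ is supported near $x_0$''. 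This fails on two counts. First, the $\Omega_{x_j}$ are neighborhoods of \emph{different} points of $K$, so their intersection need not contain $K$ --- it may well be empty --- and nothing in Definition \ref{Wf_t_s} allows you to enlarge them. Second, and decisively, $\chi_N$ is supported in the whole compact set $K$ (Definition \ref{definicijaNiza}), not near a single point; there is no single $j$ with $\supp\chi_N\subseteq\Omega_{x_j}$, so the identity $\chi_N u=\chi_N u_N^{(j)}$, on which your formula $\widehat{\chi_N u}=\widehat{\chi_N}\ast\widehat{u_N^{(j)}}$ rests, is unavailable. Intersecting neighborhoods is legitimate only across the finitely many \emph{directions} attached to one fixed base point.

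The repair, which is what H\"ormander's argument actually does, is a decomposition rather than an intersection: for each $x_j$ intersect only the finitely many neighborhoods coming from directions $\xi_k\in F\cap S^{d-1}$, obtaining one neighborhood $\Omega_j$ of $x_j$ and cones $\Gamma_{j,k}$ covering $F$; cover $K$ by finitely many $\Omega_j$; then write $\chi_N=\sum_j\psi_{j,N}\chi_N$, where $\{\psi_{j,N}\}_{N\in\N}$ is an $N$-dependent partition of unity subordinate to $\{\Omega_j\}$ whose members satisfy the same $\tilde\t,\s$-admissibility estimates \eqref{ocenaNiz}, constructed as in Lemma \ref{ogranicenostNiza}. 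A fixed smooth partition of unity will not do here, because Leibniz' rule must control up to $\lfloor(N/\tilde\t)^{1/\s}\rfloor$ derivatives of each factor uniformly in $N$, and products of two admissible sequences remain admissible precisely because of the factor $\lfloor N^{1/\s}\rfloor^{|\alpha|}$ in \eqref{ocenaNiz}. Each piece $\psi_{j,N}\chi_N$ is then supported where $u_N^{(j,k)}=u$, your convolution splitting applies verbatim to $\psi_{j,N}\chi_N u = \psi_{j,N}\chi_N u_N^{(j,k)}$ on each $\Gamma_{j,k}$, and summing the finitely many contributions yields uniform constants $A,h$ on $F$. A smaller point: Proposition \ref{potrebanUslov} cannot be invoked as you suggest, since it presupposes $u\in\E_{\{\t,\s\}}(\Omega)$, i.e.\ regularity in \emph{all} directions near the point, whereas the hypothesis here is only microlocal on the cone $F$; your convolution mechanism is the correct substitute, so that invocation should simply be dropped.
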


The main ingredient of the proof is $\tilde\t, \s$-admissibility of
$\{\chi_N\}_{N\in \N}$ and carefully chosen enumeration applied to \eqref{uslov3'}. Apart from this technical
conditions we may use
the same idea as for the proof of \cite[Lemma 8.4.4.]{HermanderKnjiga} and therefore omit the details.

As a consequence of Propositions \ref{dovoljanUslov}, \ref{potrebanUslov}, and Lemma \ref{Singsuplema}
we obtain the following Theorem.

\begin{te}
\label{SingularSupportTheorema}
Let $\t>0$, $\s>1$, $u\in \D'(U)$, and let
$\pi_1:\Rd\times\Rd\backslash \{0\}\to \Rd$ be the standard projection given with $\pi_1(x,\xi)=x$. Then
$$ 
 \sing_{\{\t,\s\}}(u)=\pi_1(\WF_{\{\t,\s\}}(u)).
$$
\end{te}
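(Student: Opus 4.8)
The plan is to prove the set equality $\sing_{\{\t,\s\}}(u)=\pi_1(\WF_{\{\t,\s\}}(u))$ by establishing the two reverse inclusions for the complements. Since both sides are defined pointwise in $x_0$, I would fix $x_0 \in U$ and show that $x_0 \notin \sing_{\{\t,\s\}}(u)$ if and only if $x_0 \notin \pi_1(\WF_{\{\t,\s\}}(u))$, i.e. if and only if no covector $(x_0,\xi_0)$ lies in $\WF_{\{\t,\s\}}(u)$.

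For the inclusion $\pi_1(\WF_{\{\t,\s\}}(u)) \subseteq \sing_{\{\t,\s\}}(u)$, I argue the contrapositive: suppose $x_0 \notin \sing_{\{\t,\s\}}(u)$. By Definition \ref{DefinicijaSingSup} there is a neighborhood $\Omega$ of $x_0$ with $u \in \E_{\{\t,\s\}}(\Omega)$. Shrinking $\Omega$ so that $\overline{\Omega}$ is strictly contained in some $K \subset\subset U$, I invoke Proposition \ref{potrebanUslov}: taking the ${\tilde\t},\s$-admissible sequence $\{\chi_N\}$ with ${\tilde\t}=\t^{\s/(\s-1)}$ produces a bounded sequence $\{\chi_N u\}$ in $\E'(U)$, equal to $u$ on $\Omega$, satisfying \eqref{uslov2ro}. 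The crucial point is that the decay estimate there holds for \emph{all} $\xi \in \Rd \setminus 0$, not merely in a cone; after the enumeration $N \to {\tilde\t}N$ this is exactly \eqref{uslov3'}, and it holds globally in $\xi$. Hence for \emph{every} $\xi_0$ we may take $\Gamma = \Rd \setminus 0$ in Definition \ref{Wf_t_s}, so $(x_0,\xi_0) \notin \WF_{\{\t,\s\}}(u)$ for all $\xi_0$, i.e. $x_0 \notin \pi_1(\WF_{\{\t,\s\}}(u))$.

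For the reverse inclusion $\sing_{\{\t,\s\}}(u) \subseteq \pi_1(\WF_{\{\t,\s\}}(u))$, again by contraposition I assume $x_0 \notin \pi_1(\WF_{\{\t,\s\}}(u))$, so that $(x_0,\xi) \notin \WF_{\{\t,\s\}}(u)$ for every $\xi \in \Rd \setminus 0$. The set $\WF_{\{\t,\s\}}(u)$ is closed (by the Remark following Definition \ref{Wf_t_s}), so its complement is open; a compactness argument over the unit sphere lets me choose a single compact neighborhood $K$ of $x_0$ and finitely many conic neighborhoods $\Gamma_1,\dots,\Gamma_m$ covering $\Rd \setminus 0$ on which the wave-front estimate holds. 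Here I apply Lemma \ref{Singsuplema} with the ${\tilde\t},\s$-admissible sequence $\{\chi_N\}$: taking $F = \overline{\Gamma_j}$ for each $j$ yields constants $A_j,h_j$ so that \eqref{SingsuplemaUslov} holds on each $\overline{\Gamma_j}$, and taking $A=\max_j A_j$, $h=\max_j h_j$ gives the estimate on all of $\Rd \setminus 0$. After the enumeration $N \to {\tilde\t}N$ this global estimate is precisely \eqref{uslov3'}, so by the sufficiency direction, Proposition \ref{dovoljanUslov}, applied to the bounded sequence $u_N = \chi_N u$ (which equals $u$ on a neighborhood $\Omega$ of $x_0$), I conclude $u \in \E_{\{\t,\s\}}(\Omega)$, hence $x_0 \notin \sing_{\{\t,\s\}}(u)$.

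The main obstacle is the passage from cone-wise estimates to a global estimate in the reverse inclusion. One must use that $\WF_{\{\t,\s\}}(u)$ being closed, together with $(x_0,\xi) \notin \WF_{\{\t,\s\}}(u)$ for every $\xi$, forces a \emph{uniform} choice of neighborhood $K$ of $x_0$ independent of the direction $\xi$; a priori Definition \ref{Wf_t_s} provides a possibly different $\Omega$ and $\Gamma$ for each covector. The compactness of the sphere and the openness of the complement resolve this, but one must take care that the single admissible sequence $\{\chi_N\}$ (constructed once via Lemma \ref{ogranicenostNiza} for the fixed $K$) serves simultaneously for all directions, which is exactly what Lemma \ref{Singsuplema} guarantees. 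A minor technical point is ensuring the finitely many constants can be amalgamated, which is routine since only finitely many cones are involved.
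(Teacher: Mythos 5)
Your proof is correct and follows essentially the same route as the paper's: Proposition \ref{potrebanUslov} gives the inclusion $\pi_1(\WF_{\{\t,\s\}}(u))\subseteq\sing_{\{\t,\s\}}(u)$, while Lemma \ref{Singsuplema} combined with Proposition \ref{dovoljanUslov} gives the converse. The only difference is presentational: you make explicit the sphere-compactness argument and apply Lemma \ref{Singsuplema} to finitely many closed cones $\overline{\Gamma_j}$, whereas the paper simply picks a compact neighborhood $K$ with $\WF_{\{\t,\s\}}(u)\cap(K\times\Rd\backslash\{0\})=\emptyset$ and invokes the lemma once with $F=\Rd\backslash\{0\}$.
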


\begin{proof}
Fix $x_0\not \in \pi_1(\WF_{\{\t,\s\}}(u))$ and
let $K$ be its compact neighborhood so that $\WF_{\{\t,\s\}}(u)\cap (K\times \Rd\backslash\{0\})=\emptyset$.
By Lemma \ref{Singsuplema} there exists a bounded sequence $\{u_N\}_{N\in \N}$ in $\E'(U)$
such that $u_N=u$ on some open set $\Omega$ and, after enumeration $N\to {\tilde \t}N$,
\begin{equation}
\label{uslovSingsup}
|\widehat u_N(\xi)|\leq A\, \frac{h^{N} N!^{\t/\s} }{|\xi|^{\lfloor N^{1/\s} \rfloor}}, \quad N\in { \N},\,\xi\in \Rd\backslash\{0\}.
\end{equation}
holds for some $A,h>0$. From Proposition \ref{dovoljanUslov} it follows that
$u\in \E_{\{\t,\s\}}(\Omega)$, that is, $x_0\not \in \sing_{\{\t,\s\}}(u)$.

Conversely, if $x_0\not \in \sing_{\{\t,\s\}}(u)$,
then there exist neighborhood $\Omega$ of $x_0$ such that $u\in \E_{\{\t,\s\}}(\Omega)$.
By Proposition \ref{potrebanUslov}, there exists a bounded sequence $\{u_N\}_{N\in \N}$ in $\E'(U)$
such that $u_N=u$ on $\Omega$ and (\ref{uslovSingsup}) holds, which implies the desired equality.
\end{proof}

To conclude the section we discuss
intersections and unions of wave-front sets $\WF_{\t,\s}$, $\t>0$, $\s>1$.
It turns out that, from the microlocal point of view, the regularity related to complements
of these unions and intersections in intimately related to the regularity
properties in the classes given by  (\ref{BorderLinePresek}) and (\ref{BorderLineUnija}).

Let there be given $u\in \D'(U)$. Then we put
\be
\label{WF01}
\WF_{0,1}(u)=\bigcap_{\s>1}\bigcap_{\t>0}\WF_{\t,\s}(u),
\ee
\be
\label{WFinfty1}
\WF_{\infty,1}(u)=\bigcap_{\s>1}\bigcup_{\t>0}\WF_{\t,\s}(u),
\ee
\be
\label{WF0infty}
\WF_{0,\infty}(u)=\bigcup_{\s>1}\bigcap_{\t>0}\WF_{\t,\s}(u),
\ee
\be
\label{WFinftyinfty}
\WF_{\infty,\infty}(u)=\bigcup_{\s>1}\bigcup_{\t>0}\WF_{\t,\s}(u).
\ee

\begin{rem}
\label{RemarkUnijaPresek}
Recall (cf. Proposition \ref{RoumieuBeurling}),
\be
\E_{\{\t,\s\}}(U)\hookrightarrow \E_{(\rho,\s)}(U)\hookrightarrow \E_{\{\rho,\s\}}(U),
\ee
when $0<\t<\rho$ and $\s>1$. Since the inclusions are strict, Definition \ref{Wf_t_s} implies
$$
\WF_{\{\rho,\s\}}(u)\subseteq
\WF_{(\rho,\s)}(u)\subseteq \WF_{\{\t,\s\}}(u)\,, u\in \D'(U).
$$
Moreover, $\dss \bigcap_{\t>0}\WF_{\{\t,\s\}}(u)=\bigcap_{\t>0}\WF_{(\t,\s)}(u)$ and
$\dss \bigcup_{\t>0}\WF_{\{\t,\s\}}(u)=\bigcup_{\t>0}\WF_{(\t,\s)}(u)$. For that reason
it is sufficient to consider intersections and unions of $\WF_{\{\t,\s\}}(u)$ in
\eqref{WF01}-\eqref{WFinftyinfty}.
\end{rem}

First we prove the following technical result.

\begin{lema}
\label{PresekUnijaWF}
Let $u\in \D'(U)$, and $\s_2>\s_1\geq 1$. Then
$$\bigcup_{\t>0}\WF_{\t,\s_2}(u)\subseteq \bigcap_{\t>0}\WF_{\t,\s_1}(u)\,.$$
\end{lema}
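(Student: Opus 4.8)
The plan is to reduce the stated set inclusion to the single pointwise inclusion
$$\WF_{\{\t_2,\s_2\}}(u)\subseteq \WF_{\{\t_1,\s_1\}}(u),\qquad \t_1,\t_2>0,\ \s_2>\s_1\geq 1. \quad (\dagger)$$
Granting $(\dagger)$, I would fix $\t_1>0$; since the left-hand side does not depend on $\t_1$, taking the union over $\t_2>0$ yields $\bigcup_{\t_2>0}\WF_{\{\t_2,\s_2\}}(u)\subseteq \WF_{\{\t_1,\s_1\}}(u)$, and as this holds for every $\t_1>0$ one may intersect over $\t_1$ to get $\bigcup_{\t_2}\WF_{\{\t_2,\s_2\}}(u)\subseteq\bigcap_{\t_1}\WF_{\{\t_1,\s_1\}}(u)$. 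By Remark \ref{RemarkUnijaPresek} the unions and intersections of $\WF_{\{\t,\s\}}$ and $\WF_{(\t,\s)}$ coincide, so this is exactly the lemma in the abbreviated notation.

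I would prove $(\dagger)$ by contrapositive, taking care to orient it correctly from the smaller index (stronger regularity, larger wave front set) to the larger one: assuming $(x_0,\xi_0)\notin \WF_{\{\t_1,\s_1\}}(u)$, I must produce a witness showing $(x_0,\xi_0)\notin \WF_{\{\t_2,\s_2\}}(u)$. By Definition \ref{Wf_t_s} the hypothesis furnishes an open neighborhood $\Omega$ of $x_0$, a conic neighborhood $\Gamma$ of $\xi_0$, and a bounded sequence $\{u_N\}_{N\in\N}$ in $\E'(U)$ with $u_N=u$ on $\Omega$ satisfying \eqref{uslov3'} for $(\t_1,\s_1)$. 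I claim the same $\Omega$ and $\Gamma$, together with $\{u_N\}$ up to enumeration, already witness regularity at level $(\t_2,\s_2)$: boundedness in $\E'(U)$ and the identity $u_N=u$ on $\Omega$ are preserved by enumeration and are unaffected by the change of parameters, so only the decay estimate must be re-derived.

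For the decay estimate I would pass to the normal form established in the discussion preceding Definition \ref{Wf_t_s}: by enumeration and Stirling's formula \eqref{niz1}, condition \eqref{uslov3'} for parameters $(\t,\s)$ is equivalent to
$$|\widehat{u_N}(\xi)|\leq A\,\frac{h^{N^{\s}}N^{\t N^{\s}}}{|\xi|^{N}},\qquad \xi\in\Gamma,\ N\in\N.$$
Thus it suffices to check that such a bound for $(\t_1,\s_1)$ forces one for $(\t_2,\s_2)$, that is, that there exist $A',h'>0$ with
$$h^{N^{\s_1}}N^{\t_1 N^{\s_1}}\leq A'\,(h')^{N^{\s_2}}N^{\t_2 N^{\s_2}},\qquad N\in\N. \quad (\star)$$
Taking $h'=\max(h,1)$ and logarithms, the decisive terms are $\t_1 N^{\s_1}\log N$ on the left and $\t_2 N^{\s_2}\log N$ on the right; since $\s_2>\s_1$ the factor $N^{\s_2-\s_1}\to\infty$, so the right-hand exponent dominates for all $N\geq N_0$, with $N_0$ depending only on $\t_1,\t_2,\s_1,\s_2$, while the lower-order $\log h$ contributions are absorbed by the choice of $h'$. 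This is the inequality underlying the class embedding \eqref{Theta_S_embedd}, now applied microlocally.

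The one point to handle with care — and the step I expect to be the main obstacle — is the finite range $N<N_0$, where $(\star)$ may fail with $A'=1$ (for instance when $\t_1$ is large and $\t_2$ small). There the left-hand side runs over finitely many explicit positive numbers, so enlarging $A'$ to dominate their ratios against the corresponding right-hand values secures $(\star)$ for every $N$. This furnishes the $(\t_2,\s_2)$ normal-form estimate on $\Gamma$, hence $(x_0,\xi_0)\notin\WF_{\{\t_2,\s_2\}}(u)$, proving $(\dagger)$ and with it the lemma. Beyond this uniform-in-$N$ bookkeeping the argument is routine; the essential discipline is to keep the contrapositive pointing from $\s_1$ to $\s_2$ and not the reverse.
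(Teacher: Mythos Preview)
Your argument is correct and follows essentially the same route as the paper: both proofs pass to the enumerated normal form $|\widehat u_N(\xi)|\leq A\,h^{N^{\s}}N^{\t N^{\s}}/|\xi|^N$ and then invoke the elementary inequality $h^{N^{\s_1}}N^{\t_1 N^{\s_1}}\leq A'\,(h')^{N^{\s_2}}N^{\t_2 N^{\s_2}}$ for $\s_2>\s_1$, which is exactly the content underlying the embedding \eqref{Theta_S_embedd}. The only difference is organizational---you isolate the pointwise inclusion $(\dagger)$ first and then take unions/intersections, whereas the paper works directly with the complement of $\bigcap_{\t>0}\WF_{\{\t,\s_1\}}(u)$---but the substance is identical.
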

\begin{proof}
Let $(x_0,\xi_0)\not \in \bigcap_{\t>0}\WF_{\{\t,\s_1\}}(u)$. Then there exists $\t_0>0$ such that $(x_0,\xi_0)\not \in \WF_{\{\t_0,\s_1\}}(u)$. Hence there exists open conic neighborhood $\Omega\times \Gamma$  of $(x_0,\xi_0)$ and  a bounded sequence $\{u_N\}_{N\in \N}$ in $\E'(U)$ such that $u_N=u$ on $\Omega$ such that, after enumeration $N\to N^{\s_1}$ (see also Lemma \ref{osobineM_p_s}),
\be
\label{OcenaPresekUnija}
|\widehat u_N (\xi)|\leq A \frac{h^{N^{\s_1}}N^{\t_0 N^{\s_1}}}{|\xi|^N},\quad N\in \N, \xi \in \Gamma,
\ee for some constants $A,h>0$.

We need to prove that for every $\t>0$, $(x_0,\xi_0)\not \in \WF_{\{\t,\s_2\}}(u)$. This follows easily from (\ref{OcenaPresekUnija}), noting that (see the proof of the \cite[Proposition 2.1.]{PTT-01}) for every $\t>0$ and $h>0$ there exists $A_1>0$ such that
$$
{h^{N^{\s_1}}N^{\t_0 N^{\s_1}}}\leq A_1 {h^{N^{\s_2}}N^{\t N^{\s_2}}}, \quad N\in \N,
$$
and the Lemma is proved.
\end{proof}

As a consequence of Lemma \ref{PresekUnijaWF} we obtain the following result
which relates our regularity with
$C^{\infty}$ and
$\E_t$-regularity in terms of the corresponding wave-front sets.

\begin{cor} \label{strict inclusions}
Let  $u\in \D'(U)$. Then, in the notation of
\eqref{WF01}-\eqref{WFinftyinfty}, we have
\begin{multline}
\WF(u) \subseteq\WF_{0,1}(u)\subseteq \WF_{\infty,1}(u)\\
\subseteq \WF_{0,\infty}(u)\subseteq \WF_{\infty,\infty}(u)\subseteq \bigcap_{\t>1}\WF_{\t}(u)\,,
\end{multline}
where $\WF$ and $\WF_{\t}$ are the classical and the Gevrey wave-front sets, respectively.
\end{cor}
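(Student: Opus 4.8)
The plan is to establish the chain inclusion by inclusion, reducing everything to the ingredients already available: the implications among the decay conditions \eqref{uslov2} and \eqref{uslov3'} set up in Section \ref{sec-2}, the monotonicity Lemma \ref{PresekUnijaWF}, and elementary set theory. By Remark \ref{RemarkUnijaPresek} the unions and intersections over $\t$ in \eqref{WF01}--\eqref{WFinftyinfty} are insensitive to the Roumieu/Beurling distinction, so throughout I work with $\WF_{\{\t,\s\}}(u)$. Two of the five inclusions, namely $\WF_{0,1}(u)\subseteq\WF_{\infty,1}(u)$ and $\WF_{0,\infty}(u)\subseteq\WF_{\infty,\infty}(u)$, are purely set-theoretic: for each fixed $\s$ one has $\bigcap_{\t>0}\WF_{\{\t,\s\}}(u)\subseteq\bigcup_{\t>0}\WF_{\{\t,\s\}}(u)$, and applying $\bigcap_{\s>1}$ (resp.\ $\bigcup_{\s>1}$) to both sides yields the claim.

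For the two endpoints I compare defining conditions. First, $\WF(u)\subseteq\WF_{\{\t,\s\}}(u)$ for every $\t>0$ and $\s>1$: if $(x_0,\xi_0)\notin\WF_{\{\t,\s\}}(u)$, a bounded sequence satisfying \eqref{uslov3'} decays in the cone $\Gamma$ like $|\xi|^{-\lfloor N^{1/\s}\rfloor}$, and since $\lfloor N^{1/\s}\rfloor\to\infty$ this gives decay faster than every fixed power of $|\xi|$, which is exactly classical $C^{\infty}$ microlocal regularity; hence $(x_0,\xi_0)\notin\WF(u)$. Intersecting over all $\t>0$ and $\s>1$ gives $\WF(u)\subseteq\WF_{0,1}(u)$. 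Second, $\WF_{\{\t,\s\}}(u)\subseteq\WF_t(u)$ for every $t>1$: starting from the Gevrey condition \eqref{uslov2} with index $t$, the same comparison used in Section \ref{sec-2} to pass from \eqref{uslov2} to \eqref{uslov3'} applies, now with the estimate $N!^{\,t}\le C\,N^{\t N^{\s}}$, which holds for every fixed $t>1$ precisely because $\s>1$ forces $\t N^{\s}\ge tN$ for large $N$. Thus any sequence witnessing $(x_0,\xi_0)\notin\WF_t(u)$ also witnesses $(x_0,\xi_0)\notin\WF_{\{\t,\s\}}(u)$, so $\WF_{\{\t,\s\}}(u)\subseteq\WF_t(u)$; intersecting over $t>1$ and taking the union over $\t>0$, $\s>1$ gives $\WF_{\infty,\infty}(u)\subseteq\bigcap_{t>1}\WF_t(u)$.

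The heart of the argument is the remaining middle inclusion $\WF_{\infty,1}(u)\subseteq\WF_{0,\infty}(u)$, and here Lemma \ref{PresekUnijaWF} is decisive. Let $(x_0,\xi_0)\in\WF_{\infty,1}(u)=\bigcap_{\s>1}\bigcup_{\t>0}\WF_{\{\t,\s\}}(u)$. Fix an arbitrary $\s_0>1$ and choose any $\s_2>\s_0$. Membership in the intersection gives, in particular, $(x_0,\xi_0)\in\bigcup_{\t>0}\WF_{\{\t,\s_2\}}(u)$, while Lemma \ref{PresekUnijaWF} (applied with $\s_1=\s_0<\s_2$) yields $\bigcup_{\t>0}\WF_{\{\t,\s_2\}}(u)\subseteq\bigcap_{\t>0}\WF_{\{\t,\s_0\}}(u)$. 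Hence $(x_0,\xi_0)\in\bigcap_{\t>0}\WF_{\{\t,\s_0\}}(u)\subseteq\bigcup_{\s>1}\bigcap_{\t>0}\WF_{\{\t,\s\}}(u)=\WF_{0,\infty}(u)$. The main obstacle is exactly this interchange of a union over $\t$ inside an intersection over $\s$ with an intersection over $\t$ inside a union over $\s$: it would fail without the monotonicity in $\s$ supplied by Lemma \ref{PresekUnijaWF}, which uses that a higher value of $\s$ (uniformly in $\t$) produces a smaller wave front set than any lower value of $\s$ (uniformly in $\t$). The endpoint comparisons, by contrast, are routine consequences of the enumeration calculus already developed in Section \ref{sec-2}.
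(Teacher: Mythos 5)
Your proof is correct and takes essentially the same route as the paper: the paper likewise treats everything except the middle inclusion as routine, proves $\WF_{\infty,1}(u)\subseteq \WF_{0,\infty}(u)$ by exactly your interchange argument via Lemma \ref{PresekUnijaWF} (stated there contrapositively), and gets the final inclusion from the same lemma with $\s_1=1$, whose proof encodes precisely your estimate $N!^{t}\leq C\,N^{\t N^{\s}}$. The only cosmetic difference is that you spell out the endpoint comparisons $\WF(u)\subseteq\WF_{0,1}(u)$ and $\WF_{\{\t,\s\}}(u)\subseteq\WF_{t}(u)$, which the paper leaves implicit.
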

\begin{proof}
Note that the last inclusion follows from Lemma \ref{PresekUnijaWF} for $\s_2>\s_1=1$
by taking unions and intersections with respect to $\t>1$.
The only nontrivial inclusion is
$ \WF_{\infty,1}(u) \subseteq \WF_{0,\infty}(u)$.
Assume that $(x_0,\xi_0)\not \in \WF_{0,\infty}(u)$, that is,
$\dss(x_0,\xi_0)\not \in \bigcap_{\t>0}\WF_{\t,\s}(u)$, for every $\s>1$.
Fix some $\s=\s_1>1$ and let $\s_2>\s_1$. By Lemma \ref{PresekUnijaWF} it follows that
$(x_0,\xi_0)\not \in\bigcup_{\t>0} \WF_{\t,\s_2}(u)$. Hence
there exists $\s>1$ such that for every $\t>0$
$(x_0,\xi_0)\not \in \WF_{\t,\s}(u)$ and therefore  $(x_0,\xi_0)\not \in \WF_{\infty,1}(u)$.
\end{proof}

To end the section, we relate $\WF_{0,\infty}(u)$ to the regularity
in  $\E_{\infty,1}$, see (\ref{BorderLineUnija}).
Let $\sing_{\infty, 1}(u)$ denote
the singular support of $u \in  \D'(U)$
related to the classe $\E_{\infty,1}$ (as appropriate union and intersection of the
corresponding singular supports in $ \E_{\t,\s}(U)$)
Recall that, for every $\s>1$,
the space $\E_{\infty,\s}$ is closed under the action of ultradifferentiable operators of the class
$\t,\s$ (see Subsection\ref{svojstva}, Theorem \ref{TeoremaZatvorenostUltraDfOP}).
Then, arguing in the similar way as in the proof of Theorem \ref{SingularSupportTheorema}
one can prove that
\be
\label{singSupUnijaPresek}
\pi_1(\WF_{0,\infty}(u))=\sing_{\infty,1}(u).
\ee

\section{Proof of Theorem \ref{glavnateorema}} \label{sec-3}

Note that Corollary \ref{FundamentalEstimateCor} follows directly from
Theorem \ref{glavnateorema} and Remark \ref{RemarkUnijaPresek}.
The first embedding in (\ref{FundamentalEstimate}) immediately
follows form the next Lemma.

\begin{lema}
\label{zatvorenostWfizvodi}
Let $u\in \D'(U)$, $\t>0, \s>1$. Then
$$
\WF_{\{\t,\s\}}(\partial_j u)\subseteq\WF_{\{\t,\s\}}(u),
$$
for all $1\leq j\leq d$.
\end{lema}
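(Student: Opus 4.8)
The plan is to show that if $(x_0,\xi_0)\not\in\WF_{\{\t,\s\}}(u)$, then $(x_0,\xi_0)\not\in\WF_{\{\t,\s\}}(\partial_j u)$. The natural approach is to produce, from a sequence witnessing regularity of $u$, a new sequence witnessing regularity of $\partial_j u$, and to estimate the Fourier transform of the latter. Since differentiation on the Fourier side amounts to multiplication by a factor proportional to $\xi_j$, the key gain of one power of $|\xi|$ in the denominator of \eqref{uslov3'} must be absorbed by a corresponding shift of the index $N$; this is precisely where the enumeration technique from Section \ref{sec-2} enters.

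\textbf{Step 1.} Assume $(x_0,\xi_0)\not\in\WF_{\{\t,\s\}}(u)$. By Definition \ref{Wf_t_s} there exist an open neighborhood $\Omega$ of $x_0$, a conic neighborhood $\Gamma$ of $\xi_0$, and a bounded sequence $\{u_N\}_{N\in\N}$ in $\E'(U)$ with $u_N=u$ on $\Omega$ satisfying \eqref{uslov3'} for some $A,h>0$; that is,
$$
|\widehat u_N(\xi)|\leq A\,\frac{h^{N}N!^{\t/\s}}{|\xi|^{\lfloor N^{1/\s}\rfloor}},\quad N\in\N,\ \xi\in\Gamma.
$$

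\textbf{Step 2.} Set $v_N:=\partial_j u_N$. Then $v_N$ is again a sequence in $\E'(U)$, it is bounded (differentiation is continuous on $\E'(U)$), and $v_N=\partial_j u$ on $\Omega$ since $u_N=u$ there. On the Fourier side $\widehat{v_N}(\xi)=2\pi i\,\xi_j\,\widehat{u_N}(\xi)$, so $|\widehat{v_N}(\xi)|\leq 2\pi|\xi|\,|\widehat{u_N}(\xi)|$, which loses exactly one power of $|\xi|$:
$$
|\widehat{v_N}(\xi)|\leq 2\pi A\,\frac{h^{N}N!^{\t/\s}}{|\xi|^{\lfloor N^{1/\s}\rfloor-1}},\quad N\in\N,\ \xi\in\Gamma.
$$

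\textbf{Step 3 (the crux).} It remains to reabsorb this lost power into the exponent $\lfloor N^{1/\s}\rfloor$ by an enumeration $N\to a_N$. The obstacle is that the exponent grows only like $N^{1/\s}$, so shifting the index by a fixed amount changes $\lfloor N^{1/\s}\rfloor$ by far less than one; instead one chooses an enumeration that advances the exponent by one unit while controlling the growth of the numerator factor $h^{N}N!^{\t/\s}$. Concretely, replacing $N$ by $N+cN^{1-1/\s}$ (or an integer sequence of this order) raises $\lfloor N^{1/\s}\rfloor$ by one, and since the asymptotic behavior of $h^{N}N!^{\t/\s}$ is preserved under enumeration by Stirling's formula \eqref{niz1}, one absorbs the extra factor $|\xi|$ at the cost of enlarging the constants $A,h$. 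This yields new constants $A',h'>0$ with
$$
|\widehat{v_N}(\xi)|\leq A'\,\frac{(h')^{N}N!^{\t/\s}}{|\xi|^{\lfloor N^{1/\s}\rfloor}},\quad N\in\N,\ \xi\in\Gamma,
$$
so that $\{v_N\}$ witnesses $(x_0,\xi_0)\not\in\WF_{\{\t,\s\}}(\partial_j u)$. Iterating over $1\leq j\leq d$ gives the claim for each partial derivative, and the main theorem's first inclusion then follows by applying this to the monomials $D^\alpha$ making up $P(D)$.
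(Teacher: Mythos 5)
Your proof is correct and is essentially the paper's own argument in different bookkeeping: the paper first applies the enumeration $N\to N^{\s}$, so that \eqref{uslov3'} becomes $|\widehat u_{N}(\xi)|\leq A\, h^{N^{\s}}N^{\t N^{\s}}/|\xi|^{N}$, and then passes to $\partial_j u_{N+1}$, absorbing the unit index shift via $\widetilde{(M.2)'}$ --- which is exactly your shift $N\to N+cN^{1-1/\s}$ read in the enumerated variable, with $\widetilde{(M.2)'}$ playing the role of your claim that the numerator is preserved, i.e.\ that $a_N!^{\t/\s}\leq C^{N}N!^{\t/\s}$ for $a_N=N+O(N^{1-1/\s})$ (true since $N^{1-1/\s}\log N=o(N)$, not really a consequence of \eqref{niz1} alone). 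One quantitative nit: since $N\mapsto N^{1/\s}$ is concave, the shift with $c=\s$ raises $N^{1/\s}$ by at most one, so you must take $c>\s$ and restrict to large $N$, disposing of the finitely many remaining indices by enlarging $A$ --- a routine repair that does not change the approach.
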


\begin{proof}
Let $(x_0,\xi_0)\not \in \WF_{\t,\s}(u)$. Then
there exists a conical neighborhood $\Omega\times \Gamma$ of $(x_0,\xi_0)$
and a bounded sequence $\{u_N\}$ in $u\in \dss\E'(U)$ such that $u_N=u$ on $\Omega$,
and such that after the enumeration $N\to N^{\s}$ we obtain
\begin{equation}
|\widehat u_{N}(\xi)|\leq A\, \frac{h^{N^{\s}} N^{\t N^{\s}} }{|\xi|^{N}}, \quad N\in { \N},\,\xi\in \Gamma,
\end{equation}
for some $A,h>0$.
Then, for $x_0\in \Omega$,
\be
|\widehat{\partial_j u}_{N+1}(\xi)|
\leq A |\xi|\, \frac{h^{N} (N+1)^{\t (N+1)^{\s}} }{|\xi|^{N+1}}\leq  A_1\,
\frac{h_1^{N} N^{\t N^{\s}} }{|\xi|^{N}} ,
\ee
$N\in { \N},$ $\xi\in \Gamma,$ $j \in \{ 1,\dots, d\}$,
$\widetilde{(M.2)}'$ is used for the second inequality, and the inclusion follows.
\end{proof}

Therefore it remains to prove that
$$
\WF_{\{2^{\s-1}\t,\s\}}(u)
\subseteq
\WF_{\{\t,\s\}}(P(D)u) \cup {\rm Char}(P(D)).
$$

The following inequality, which holds for $\t>0$, $\s>1$ and for some $C>0$,
will be frequently used:
\be
\label{SimpleInequality}
{\lfloor N^{1/\s} \rfloor}^{\lfloor (N/\t)^{1/\s} \rfloor}
\leq{ N }^{N \t^{-1/\s}/\s}\leq C^{N}{ N! }^{\t^{-1/\s}/\s}.
\ee

Assume that $(x_0,\xi_0)\not \in\WF_{\{\t,\s \}}(P(D)u)\cup {\rm Char}(P(D))$.
Then there exists a compact set $K$ containing $x_0$
and a closed cone $ \Gamma$ containing $ \xi_0 $ such that
$P_m(x,\xi)\not=0$ when $(x,\xi) \in  K \times \Gamma$ and
$(K\times \Gamma)\cap \WF_{\{\t,\s \}}(P(D)u)=\emptyset$.

Let $\tilde \t = \t^{\frac{\s}{\s -1}} $ and let
$\{\chi_N\}_{N\in \N}$, be a $\tilde \t, \s$-admissible sequence  with respect to $K$.

Put $u_N=\chi_{  2^{\s}N  }u$, $N \in \N$, so that
$$
\widehat u_N(\xi) =  \int u(x) \chi_{ 2^{\s}N  }(x)e^{-i x\xi} dx, \;\;\; \xi\in \Rd,\; N \in \N.
$$

The easy part of the proof is the
estimate of $ |\widehat u_N(\xi) | $,  $N \in \N$, for "small" values of $\xi \in \Gamma$,
that is when $|\xi| \leq \lf N^{1/\s}  \rf $.
In fact, since $\{u_N\}_{N\in \N}$ is bounded in $\E'(U)$,
Paley-Wiener theorems (see \cite{KomatsuNotes}), and the fact that $e^{-ix\cdot\xi}\in C^{\infty}(\Rd_x)$,
for every $\xi\in \Rd$, implies that
$ \displaystyle |\widehat u_N(\xi)|=|\langle u_N,e^{-i\cdot\xi}\rangle|\leq C\langle\xi\rangle^M,$
for some $C,M>0$ independent of $N$. Hence, from \eqref{SimpleInequality} we have
$$
|\xi|^{\lf (N/\tilde \t)^{1/\s} \rf}|\widehat{u_N}(\xi)|
\leq \lf N^{1/\s}  \rf^{\lf (N/\tilde \t)^{1/\s} \rf}|\widehat{u_N}(\xi)|
\leq A C^N N^{\frac{\tilde \t^{-1/\s}}{\s}N},
$$
where $A,C>0$ do not depend on $N$. After enumeration $N\to \tilde \t N$ we obtain
$$
|\widehat{u_N}(\xi)|\leq A \frac{C^N N^{\frac{\tilde \t^{1-1/\s}}{\s}N}}{|\xi|^{\lf N^{1/\s} \rf}}
\leq A \frac{h^N N!^{\frac{\t}{\s}}}{|\xi|^{\lf N^{1/\s} \rf}},
$$
which estimates $|\widehat{u_N}(\xi)| $  when $\xi \in \Gamma$, $ |\xi| \leq \lf N^{1/\s}  \rf,$ $ N \in \N.$

It remains to estimate  $ |\widehat u_N(\xi) | $,   when
$\xi \in \Gamma$,  $|\xi| > \lf N^{1/\s}  \rf $ and for $N \in \N$ large enough
(so that $ N \rightarrow \infty $ implies $|\xi| \rightarrow \infty$).

\par

As in the proof of \cite[Theorem 8.6.1]{HermanderKnjiga}, in
Subsection  \ref{subsecrepres} we use  the technique of  approximate solution
(see also \cite[Theorem 1, Section 1.6]{Rauch})
to obtain
\be
\label{fundamentalEquality}
\chi_{2^{\s} N}(x)e^{-ix\cdot\xi} =
  P^T (D) \left (
\frac{e^{-i x\cdot \xi}}{P_m(\xi)} w_N (x, \xi)
\right)+  e_N (x, \xi)e^{-ix\xi}\,
\ee
$x\in K,$ $\xi \in \Gamma$,  $|\xi| > \lf N^{1/\s}  \rf $,
that is, the following representation holds:
\begin{multline}
\label{ocenitiJedn}
\widehat u_N(\xi)  =  \int u(x) e_N (x, \xi) e^{-i x\xi} dx +
 \int u(x) P^T (D) \left (
\frac{e^{-i x\cdot \xi} w_N (x, \xi)}{P_m(\xi)}
\right)  dx \\
=  \int u(x) e_N (x, \xi) e^{-i x\xi} dx +
 \int P (D)  u(x) \left (
\frac{e^{-i x\cdot \xi} w_N (x, \xi)}{P_m(\xi)}
\right)  dx,
\end{multline}
where
\begin{equation}
\label{ParcijalnaSuma}
w_N(x,\xi) =\sum_{\mathfrak{S} =0} ^{\lf (\frac{N}{\tilde \t})^{\frac{1}{\s}} \rf-m}
{|a|\choose a_1,a_2\dots a_m} (R_1^{a_1}R_2^{a_2}\dots R_m^{a_m}
\chi_{2^{\s} N})(x,\xi),
\end{equation}
\begin{equation}
\label{errorterm}
e_N(x,\xi)= \sum_{k=1}^m \sum_{\mathfrak{S}  = \lf (\frac{N}{\tilde \t})^{\frac{1}{\s}}\rf-m +1}
^{\lf (\frac{N}{\tilde \t})^{\frac{1}{\s}} \rf-m+k}
{|a|\choose a_1,...,a_m}(R_1^{a_1}...R_m^{a_m}
\chi_{2^{\s} N})(x,\xi),
\end{equation}
$ x\in K,$ $ \xi\in \Gamma, $ $|\xi| > \lf N^{1/\s} \rf$,
and we put $ \mathfrak{S} =  a_1+2a_2+ \dots +m a_m$.

The derivation of \eqref{ocenitiJedn}
and the calculation of $ w_N(x,\xi)$ and $e_N(x,\xi)$ is
done in Subsections \ref{subsecrepres} and \ref{last}, so we continue with the estimation of the first term in
\eqref{ocenitiJedn}.

Estimated number of terms in $e_N(x,\xi)$ given in Subsection \ref{subsecrepres},
and the estimates of $ D^\beta (R_1^{a_1}...R_m^{a_m}\chi_{ 2^{\s}N  })$
given by (\ref{OcenaProizvodaOp}) (Subsection \ref{subsecderivatives})
imply
\begin{eqnarray}
\label{OcenaPrvogSab}
|{\langle u(x), e_{N}(x,\xi)e^{-i x\cdot\xi}\rangle}|&\leq & A \sum_{|\alpha|\leq M}|D_x^{\alpha}(e_{N}(x,\xi)e^{-ix\xi})|\nonumber\\
&\leq& A \sum_{|\alpha|\leq M}\sum_{\beta\leq \alpha}{\alpha\choose \beta}|D_x^{\alpha-\beta}e^{-ix\xi}||D_x^{\beta}e_N(x,\xi)|\nonumber\\
&\leq& A |\xi|^M |\xi|^{-\lf 2^{\frac{1-\s}{\s}}(N/\tilde \t)^{1/\s} \rf-M}
C^N N!^{\frac{\tilde \t^{-1/\s}}{\s}}\nonumber\\
&=& A\frac{C^N N!^{\frac{\tilde \t^{-1/\s}}{\s}}}{|\xi|^{\lf 2^{\frac{1-\s}{\s}}(N/\tilde \t)^{1/\s} \rf}},
x\in K, \xi \in \Gamma,
\end{eqnarray}
for suitable constants $A,C>0$ and $|\xi| $ large enough.
After  enumeration $N\to \tilde \t2^{\s-1}N$, (\ref{OcenaPrvogSab}) is equivalent to
$$
|{\langle u(x), e_{N}(x,\xi)e^{-ix\cdot\xi}\rangle}|
\leq A\frac{C^N N!^{\frac{\t 2^{\s-1}}{\s}}}{|\xi|^{\lf N^{1/\s} \rf}},\quad x\in K, \xi \in \Gamma,$$
which estimates the first term on the righthand side of (\ref{ocenitiJedn}).
In fact, we will use a slightly weaker estimate which is obtained from  (\ref{OcenaPrvogSab}) after enumeration
\be \label{anotherenumeration}
N \to N+\lceil{\tilde \t} 2^{\s-1}(M+d+1)^{\s}\rceil.
\ee

It remains to  estimate the  second term on the righthand side of
(\ref{ocenitiJedn}) for $|\xi|>\lf N^{1/\s} \rf$.
This is the hardest part of the proof.
By the Lemma \ref{Singsuplema} there exists a bounded sequence $\{f_N\}_{N\in \N}$
in $\E'(U)$ such that $f_N=f = P(D)u$ in a neighborhood of $K$ and
there exists a cone $V$ such that  $\overline \Gamma \subset V$ and
\be
\label{ocenaZaf}
|{\mathcal F} ({f}_N)(\eta)|\leq A \frac{h^N N!^{\frac{\tilde \t^{-1/\s}}{\s}}}{|\eta|^{\lf (N/\tilde \t)^{1/\s} \rf}},\quad
\eta\in V.
\ee

Since $\{\chi_{ 2^{\s}N  }(x)\}_{N\in \N}$ is bounded in $C_0^{\infty}(U)$,
by the Paley-Wiener theorem (see also Remark \ref{PaleyRemark}) it follows that for every ${\tilde M}>0$ there exists $C>0$ which does not depend on $N$
so that $ |\widehat\chi_{ 2^{\s}N  }(\eta)|\leq C \langle\eta\rangle^{- {\tilde M}}$, $N\in \N$.
From $\supp\chi_N\subseteq K$, $N\in \N$, it follows that
$$\pi_1(\supp w_N(x,\xi))\subseteq K,\quad N\in \N,$$ and since $f_N=f$ in a neighborhood of $K$,
we have $w_N f=w_N f_{N'}$ in $\D'(U)$, where
we put  $N'=N-\lceil 2^{\s-1}\tilde \t (M+d+1)^{\s} \rceil$.
Therefore (and since $ {\mathcal F} (g_1\cdot g_2) (\xi) = ({\mathcal F} (g_1) *  {\mathcal F} (g_2)) (\xi)$))
\begin{multline*}
\langle f(\cdot) e^{-i\xi\cdot},w_N(\cdot,\xi)/P_m(\xi)\rangle
= \frac{1}{P_m(\xi)}{\mathcal F}_{x\to \xi}(f_{N'}(x)w_N(x,\xi))(\xi) \\
=\frac{1}{P_m(\xi)}\int_{{\bf R}^d}{\mathcal F}(f_{N'})(\xi-\eta){\mathcal F}_{x\to \eta}(w_N(x,\xi))(\eta)\,d\eta
=I_1+I_2,
\end{multline*}
where
\be
I_1=\frac{1}{P_m(\xi)}\int_{|\eta|<\varepsilon |\xi|}{{\mathcal F} (f_{N'})}(\xi-\eta){\mathcal F}_{x\to \eta}({{w_N}}(x,\xi))(\eta,\xi)\,d\eta,\quad
\ee
\be
I_2=\frac{1}{P_m(\xi)}\int_{|\eta|\geq\varepsilon |\xi|}{{\mathcal F} (f_{N'})}(\xi-\eta){\mathcal F}_{x\to \eta}({{w_N}}(x,\xi))(\eta,\xi)\,d\eta,
\ee
and $0< \varepsilon <1$ is chosen so that $\xi-\eta\in V$ when $\xi\in \Gamma,$ $\xi >\lf N^{1/\s} \rf$,
and $|\eta|<\varepsilon |\xi|$.

Since $|\eta|<\varepsilon |\xi|$ implies $|\xi-\eta|\geq (1-\varepsilon)|\xi|$,
by using the computation of ${\mathcal F}_{x\to\eta}(w_N)(\eta,\xi)$ from Subsection \ref{subsecfurije}, we estimate
$I_1$ as follows:
\begin{eqnarray}
|I_1|&\leq& \frac{1}{|P_m(\xi)|} \int_{|\eta|<\varepsilon |\xi|}|{\mathcal F} (f_{N'})(\xi-\eta)|
|{\mathcal F}_{x\to \eta}({w_N})(\eta,\xi)|\,d\eta \nonumber\\
&\leq& \int_{|\eta|<\varepsilon |\xi|}A \frac{h^{N'} N'!^{\t/\s}}{|\xi-\eta|^{\lf {N'}^{1/\s} \rf}}
|{\mathcal F}_{x\to \eta}({w_N})(\eta,\xi)|\,d\eta \nonumber\\
&\leq& A \frac{h^{N'} N'!^{\t/\s}}{((1-\varepsilon)|\xi|)^{\lf {N'}^{1/\s} \rf}}
\int_{|\eta|< \varepsilon |\xi|}|{\mathcal F}_{x\to \eta}({w_N})(\eta,\xi)|\,d\eta\nonumber\\
&\leq& A_1 \frac{h_1^{N'} N'!^{\t/\s}}{|\xi|^{\lf {N'}^{1/\s} \rf}} C^{\lf(N/\t)^{1/\s}\rf}\int_{{\bf R}^d}|\widehat\chi_{2^{\s} N}(\eta)|\,d\eta\nonumber\\
&\leq& A_2 \frac{h_2^{N'} N'!^{\t/\s}}{|\xi|^{\lf {N'}^{1/\s} \rf}},
\;\;\; \xi\in  \Gamma, |\xi| > \lf N^{1/\s} \rf.
\end{eqnarray}

We used the Paley-Wiener theorem for $\{\widehat\chi_{2^{\s}N}\}$ and
trivial inequality $|P_m(\xi)|\geq 1$ when $|\xi|>\lf N^{1/\s} \rf$.

It remains  to estimate $I_2$. Note that $|\eta|\geq \varepsilon |\xi|$
implies $|\xi-\eta|\leq (1+1/\varepsilon)|\eta|$, and by Paley-Wiener type estimates we have $ |{\mathcal F}(f_{N'})(\eta)|\leq C \langle\eta\rangle^M,$ where $C>0$ does not depend on $N'$.
Therefore
\begin{multline}
|I_2|\leq \frac{1}{|P_m(\xi)|} \int_{|\eta|\geq\varepsilon |\xi|}|{\mathcal F} f_{N'}(\xi-\eta)|
|{\mathcal F}_{x\to \eta}({w_N})(\eta,\xi)|\,d\eta \nonumber\\
\leq A \int_{|\eta|\geq \varepsilon |\xi|}\langle\xi-\eta\rangle^M
\langle\eta\rangle^{\lf 2^{\frac{1-\s}{\s}}(N'/{\tilde \t})^{1/\s}\rf+d+1}
\frac{|{\mathcal F}_{x\to \eta}({w_N})(\eta,\xi)|}{\langle\eta\rangle^{\lf 2^{\frac{1-\s}{\s}} (N'/{\tilde \t})^{1/\s}\rf+d+1}}
 \,d\eta \nonumber\\
\leq C^{N+1}
\frac{\sup_{\eta\in \Rd}\langle \eta \rangle^{\lf 2^{\frac{1-\s}{\s}}(N'/{\tilde \t})^{1/\s}\rf+M+d+1}
}{|\xi|^{\lf 2^{\frac{1-\s}{\s}}(N'/{\tilde \t})^{1/\s} \rf}} |{\mathcal F}_{x\to \eta}({w_N}(x,\xi))(\eta,\xi)|,
\end{multline}
when $\xi\in  \Gamma$, $|\xi| > \lf N^{1/\s} \rf$.

To finish the proof, we  show that if $\xi\in  \Gamma$, $|\xi| > \lf N^{1/\s} \rf$
then there exists $h>0$ such that
\be
\label{ZelimodaOcenimo}
\sup_{\eta \in \Rd}\langle \eta \rangle^{\lf 2^{\frac{1-\s}{\s}}(N'/{\tilde \t})^{1/\s}\rf+M+d+1}
|{\mathcal F}_{x\to \eta}({w_N})(\eta,\xi)|\leq h^{N+1} N!^{1/\s}.
\ee
Since $N'=N-\lceil 2^{\s-1}\tilde \t (M+d+1)^{\s} \rceil $,
it follows that
\be
\label{NejednakostSaN1}
(N/\tilde \t)^{1/\s}
=
\Big(\frac{N'+\lceil 2^{\s-1}\tilde \t (M+d+1)^{\s} \rceil}{\t}\Big)^{1/\s}
\geq 2^{\frac{1-\s}{\s}}(N'/{\tilde \t})^{1/\s}+M+d+1.
\ee
If $\mathfrak{S}\leq \lf (N/{\tilde \t})^{1/\s} \rf-m$,
$|\beta|=\lf (N/ \tilde \t)^{1/\s} \rf$
then
\be
\label{OcenazaPoslednjiInt}
\mathfrak{S}+|\beta|< 2\lf (N/{\tilde \t})^{1/\s} \rf\leq \lf 2 (N/{\tilde \t})^{1/\s} \rf,
\ee
From (\ref{OcenazaPoslednjiInt}), when $x\in K$ and  $\xi\in  \Gamma$, $|\xi| > \lf N^{1/\s} \rf$
it follows that
$$
|D^{\beta}w_N(x,\xi)|\leq
\sum_{\mathfrak{S} = 0} ^{\lf (\frac{N}{{\tilde \t}})^{\frac{1}{\s}} \rf-m}
{|a|\choose a_1,a_2\dots a_m}
\sup_{x\in K}|(D^{\beta}R_1^{a_1}R_2^{a_2}\dots R_m^{a_m}\chi_{2^{\s} N})(x,\xi)|
$$
$$
\leq \sum_{\mathfrak{S} = 0} ^{\lf (\frac{N}{{\tilde \t}})^{\frac{1}{\s}} \rf-m}
{|a|\choose a_1,a_2\dots a_m}
|\xi|^{-\mathfrak{S}} C^{\mathfrak{S}+|\beta|+1} \lf N^{1/\s} \rf^{\mathfrak{S}+|\beta|}\nonumber
$$
$$
\leq \lf N^{1/\s} \rf^{|\beta|} \sum_{\mathfrak{S}=0} ^{\lf (\frac{N}{{\tilde \t}})^{\frac{1}{\s}} \rf-m}
{|a|\choose a_1,a_2\dots a_m} C^{\mathfrak{S}+|\beta|+1}\\
\leq  C'^{\lf(N/\tilde \t)^{1/\s}\rf+1}\lf N^{1/\s} \rf^{|\beta|}.
$$
Since $\pi_1(\supp w_N(x,\xi))\subseteq K$ and $|\beta|=\lf(N/\tilde \t)^{1/\s}\rf$,
we obtain
\be
\label{glupost11}
|\eta|^{\lf(N/{\tilde \t})^{1/\s}\rf}|{\mathcal F}_{x\to \eta}({w_N})(\eta,\xi)|
\leq  C'^{\lf(N/\tilde \t)^{1/\s}\rf+1}\lf N^{1/\s} \rf^{\lf(N/{\tilde \t})^{1/\s}\rf}\leq C''^{N+1} N^{\frac{{\tilde \t}^{-1/\s}}{\s}N}\,,
\ee
where we used the first part of (\ref{SimpleInequality}).
Now (\ref{NejednakostSaN1}) and (\ref{glupost11}) gives
\begin{multline}
\sup_{\eta \in \Rd}\langle \eta \rangle^{\lf 2^{\frac{1-\s}{\s}}(N'/{\tilde \t})^{1/\s}\rf+M+d+1}
|{\mathcal F}_{x\to \eta}({w_N})(\eta,\xi)|\\
\leq \sup_{\eta\in {\bf R}^d}\langle\eta \rangle^{\lf (N/{\tilde \t})^{1/\s}\rf}
|{\mathcal F}_{x\to \eta}({w_N})(\eta,\xi)|\leq C''^{N+1} N^{\frac{{\tilde \t}^{-1/\s}}{\s}N}\,,
\end{multline} and (\ref{ZelimodaOcenimo}) follows.
Therefore
\be
\label{poslednjiUslov1}
|I_2|\leq A \frac{h^{N} N^{\frac{{\tilde \t}^{-1/\s}}{\s}N}}{|\xi|^{\lf 2^{\frac{1-\s}{\s}}(N'/{\tilde \t})^{1/\s} \rf}}\,,
\ee
for suitable constants $A,h>0$. After enumeration given by
\eqref{anotherenumeration},
and using $(M.2)'$ property of the sequence $N^{\frac{{\tilde \t}^{-1/\s}}{\s}N}$, we conclude that (\ref{poslednjiUslov1}) is equivalent to
\be
|I_2|\leq A \frac{h^{N} N!^{\frac{{\tilde \t}^{-1/\s}}{\s}}}{|\xi|^{\lf 2^{\frac{1-\s}{\s}}(N/{\tilde \t})^{1/\s} \rf}}\,,
\ee
for some $A,h>0$.
After enumeration $N\to {\tilde \t} 2^{\s-1}N$ we finally obtain
$$
|\widehat u_N(\xi)|\leq A\frac{h^N N!^{\frac{\t 2^{\s-1}}{\s}}}{|\xi|^{\lf N^{1/\s} \rf}},
$$
for some $A,h>0$, and the proof is finished.

\subsection{Derivation of the representation of $\widehat u_N(\xi)$} \label{subsecrepres}
Formally, we are searching for  $v(x,\xi)$ so that
$$
\widehat u_N(\xi)  =  \int u(x) \chi_{ 2^{\s}N  }(x)e^{-i x\xi} dx
=  \int u(x) P^T (D) v(x,\xi) dx,
$$
$\xi \in \Gamma$,  $|\xi| > \lf N^{1/\s}  \rf $,
where $P^T (D)=\dss\sum_{|\alpha|\leq m} (-1)^{|\alpha|}a_{\alpha} D^{\alpha}$
is the transpose operator of $P(D)$,
and $v(x,\xi)$ is the solution of the equation
\begin{equation}
\label{ResitiJednacinu}
 P^T (D) v(x, \xi)
= \chi_{ 2^{\s}N  }(x)e^{-i x\xi}, \quad x\in K, \xi\in \Gamma, |\xi| > \lf N^{1/\s}  \rf .
\end{equation}
If $v(x,\xi)$ is of the form
$ \displaystyle v (x, \xi) = \frac{e^{-i x\xi}w(x, \xi)}{P_m(\xi)}, $
for some $ w (\cdot,\xi) \in C^{\infty}(K),$ where
$ x\in K,$ $ \xi\in \Gamma,$  $|\xi| > \lf N^{1/\s}  \rf $,
then \eqref{ResitiJednacinu} becomes
\be
\label{JednacinaKojuResavamo}
(I -R(\xi))w(x,\xi)= \chi_{2^{\s} N}(x)\,
\quad x\in K, \xi \in \Gamma, |\xi| > \lf N^{1/\s}  \rf ,
\ee
where $R(\xi)=\sum_{j=1}^m R_j(\xi)$,
$\dss R_j(\xi)= p_j(\xi) \sum_{|\alpha|\leq j}a_{\alpha}D^{\alpha}$, and $p_j(\xi)$
are homogeneous functions of order $-j$.
In fact, formal calculation gives
\begin{multline}
\nonumber
e^{i x\xi} P^T (D)(\frac{w(x,\xi)e^{-i x\xi}}{{P_m(\xi)}})\\[1ex]
= e^{i x\xi} \frac{1}{P_m(\xi)}\sum_{|\alpha|\leq m}\sum_{\beta\leq\alpha} {\alpha \choose \beta}
(-1)^{|\alpha|}a_{\alpha} D^{\alpha-\beta}(e^{-i x\xi}) D^{\beta}w(x,\xi) \\[1ex]
= \sum_{|\alpha|\leq m} \sum_{\beta\leq\alpha} {\alpha \choose \beta}
(-1)^{|\alpha|}a_{\alpha} \Big( \frac{(-\xi)^{\alpha-\beta}}{P_m(\xi)}\Big)D^{\beta}w(x,\xi),
\end{multline}
for $x\in K$ and $\xi\in \Gamma$,  $|\xi| > \lf N^{1/\s}  \rf $.
Since
$\dss \frac{(-\xi)^{\alpha-\beta}}{P_m(\xi)}$ is homogeneous
of order $|\alpha|-|\beta|-m$ with respect to $\xi$,
it follows that (\ref{ResitiJednacinu}) would imply \eqref{JednacinaKojuResavamo}.

Now, successive applications of the operator $R$ in \eqref{JednacinaKojuResavamo} give
$$
 R^{k-1}(\xi)w(x,\xi) - R^k (\xi)w(x,\xi)=
R^{k-1}(\xi)\chi_{2^{\s} N}(x), \;\; x\in K, \xi \in \Gamma,
|\xi| > \lf N^{1/\s}  \rf,
$$
for every $ k \in \{ 1, \dots, N \} $,
so that after summing up those $N$ equalities we obtain
$$ \displaystyle
 w(x,\xi)  - R^N(\xi)w(x,\xi)= \sum_{k=0} ^{N-1 } R^{k}(\xi)\chi_{2^{\s} N}(x),
$$
which gives formal approximate solution
\begin{multline}
w(x,\xi) = \sum_{k=0} ^{\infty} R^{k}\chi_{2^{\s} N}(x, \xi) \\
= \sum_{|a|=0}^{\infty} {|a|\choose a_1,a_2,\dots,a_m}R_1^{a_1}R_2^{a_2}\dots R_m^{a_m}\chi_{2^{\s} N}
(x, \xi). \label{beskonacnaSuma}
\end{multline}
The operators $R_k^{a_k}(\xi)$, $1\leq k\leq m$,
are of order less then or equal to $k a_k$ and homogeneous of order $-k a_k$ with respect to $\xi$.
Since $P(D)$ have constant coefficients,
the operators $R_j$ commute, and we used
the generalized Newton formula, cf. \cite{Rodino}.

We proceed with the following approximation procedure. We consider partial sums
$$
w_N(x,\xi)
=\sum_{\mathfrak{S} = 0} ^{\lf (\frac{N}{\tilde \t})^{\frac{1}{\s}} \rf-m}
{|a|\choose a_1,a_2\dots a_m} (R_1^{a_1}R_2^{a_2}\dots R_m^{a_m}
\chi_{2^{\s} N})(x,\xi),
$$
$\xi \in \Gamma$,  $|\xi| > \lf N^{1/\s}  \rf $,
and $N\in \N$ is large enough, so that
\eqref{JednacinaKojuResavamo} takes
the form \eqref{fundamentalEquality}
and the error term  $e_N$ is given by:
$$
e_N(x,\xi)=
\sum_{k=1}^m \sum_{\mathfrak{S} =\lf (\frac{N}{\tilde \t})^{\frac{1}{\s}}
\rf-m+1} ^{\lf (\frac{N}{\tilde \t})^{\frac{1}{\s}} \rf-m+k}
{|a|\choose a_1,...,a_m}(R_1^{a_1}...R_m^{a_m}
\chi_{2^{\s} N})(x,\xi).
$$
The precise calculation which leads to \eqref{fundamentalEquality} is given in Subsection \ref{last}.
Note that the number of terms in \eqref{errorterm}
is bounded by $  4\cdot 2^{\lf (\frac{N}{\tilde \t})^{\frac{1}{\s}} \rf},$ since
from ${n\choose k}\leq 2^n$, $k\leq n$, $n\in \N$, we obtain
$$
{|a|\choose a_1,a_2,\dots a_m}\leq 2^{|a|}2^{|a|-a_1}\dots 2^{|a|-a_1-\dots-a_{m-2}}\leq 2^{a_1+2 a_2+\dots+ma_m},
$$
and therefore
$$
\sum_{k=1}^m
\sum_{\mathfrak{S} =\lf (\frac{N}{\tilde \t})^{\frac{1}{\s}} \rf-m +1}
^{\lf (\frac{N}{\tilde \t})^{\frac{1}{\s}} \rf-m+k}{|a|\choose a_1,\dots,a_m}
\leq \sum_{k=1}^m
\sum_{\mathfrak{S} =\lf (\frac{N}{\tilde \t})^{\frac{1}{\s}} \rf-m +1}
^{\lf (\frac{N}{\tilde \t})^{\frac{1}{\s}} \rf-m+k}
2^{a_1+2a_2\dots+m a_m}
$$
$$
\leq  2^{\lf (\frac{N}{\tilde \t})^{\frac{1}{\s}} \rf-m+1}\sum_{k=1}^m 2^k \leq 4\cdot 2^{\lf (\frac{N}{\tilde \t})^{\frac{1}{\s}} \rf},
$$
where we put $ \mathfrak{S} = a_1+2 a_2+\dots+ma_m.$

\subsection{The calculation of the error term} \label{last}
For multinomial coefficients
\begin{multline}
{|a|\choose a_1,a_2,\dots a_m}
:={|a|\choose a_1}{|a|-a_1\choose a_2}\dots{|a|-a_1-\dots-a_{m-2}\choose a_{m-1}} \\[1ex]
=\frac{|a|!}{a_1!a_2!\dots a_m!}, \;\;\; |a|=a_1+a_2+\dots+a_m, \; a_k\in \N, \; k\leq m,
\end{multline}
a generalization of Pascal's triangle equality for the binomial formula gives
\be
\label{PascalTriangle}
{|a|\choose a_1,..., a_m}=\sum_{k=1}^m {|a|-1\choose a_1,..., a_k-1,... a_m},\quad |a|\geq 1,
\ee
wherefrom for $|a|\geq 1$, and putting $ \mathfrak{S} = a_1+2 a_2+\dots+ma_m$ we obtain
$$
\sum_{\mathfrak{S}=0} ^{(\lf \frac{N}{\t})^{\frac{1}{\s}} \rf-m}
{|a|\choose a_1,..., a_m} R_1^{a_1}...R_m^{a_m}\chi_{2^{\s} N}
$$
$$
=\sum_{\mathfrak{S}=0} ^{ \lf (\frac{N}{\t})^{\frac{1}{\s}} \rf-m}
\Big(\sum_{k=1}^m {|a|-1\choose a_1,..., a_k-1,... a_m}\Big)
R_1^{a_1}... R_m^{a_m} \chi_{2^{\s} N}
$$
$$
=\sum_{k=1}^m \sum_{\mathfrak{S}=0} ^{\lf (\frac{N}{\t})^{\frac{1}{\s}} \rf-m-k}{|a|\choose a_1,..., a_k,... a_m}R_1^{a_1}...R_k^{a_k+1}...  R_m^{a_m}
\chi_{2^{\s} N}
$$
\begin{equation}
\label{OperatorRacun}
=\sum_{k=1}^m R_k \Big(\sum_{\mathfrak{S}=0} ^{\lf (\frac{N}{\t})^{\frac{1}{\s}} \rf-m-k}{|a|\choose a_1,...,a_m}R_1^{a_1}...R_m^{a_m}
\chi_{ 2^{\s}N  }\Big),\,
\end{equation}
where for the second equality we interchange the summation and substitute $a_k$ with $a_k+1$.

Hence, for $|a|\geq 0$ we have
\begin{multline}
(I-R)w_N= \sum_{ \mathfrak{S} =0} ^{(\lf \frac{N}{\tilde \t})^{\frac{1}{\s}} \rf-m}{|a|\choose a_1,..., a_m}R_1^{a_1}...R_m^{a_m}
\chi_{2^{\s} N}\\
-\sum_{k=1}^m R_k \Big(\sum_{\mathfrak{S} =0} ^{\lf (\frac{N}{\tilde \t})^{\frac{1}{\s}} \rf-m-k}{|a|\choose a_1,...,a_m}R_1^{a_1}...R_m^{a_m}
\chi_{2^{\s} N}\\
+\sum_{\mathfrak{S} = \lf (\frac{N}{\tilde \t})^{\frac{1}{\s}} \rf-m-k+1} ^{\lf (\frac{N}{\tilde \t})^{\frac{1}{\s}} \rf-m}{|a|\choose a_1,...,a_m}R_1^{a_1}...R_m^{a_m}
\chi_{2^{\s} N}\Big) \\
=\chi_{2^{\s} N}
- \sum_{k=1}^m \sum_{\mathfrak{S} = \lf (\frac{N}{\tilde \t})^{\frac{1}{\s}} \rf-m-k+1} ^{\lf (\frac{N}{\tilde \t})^{\frac{1}{\s}} \rf-m}{|a|\choose a_1,...,a_m}R_1^{a_1}...R_k^{a_k+1}...R_m^{a_m}
\chi_{2^{\s} N} \\
= \chi_{2^{\s} N}
- \sum_{k=1}^m \sum_{\mathfrak{S} = \lf (\frac{N}{\tilde \t})^{\frac{1}{\s}} \rf-m+1} ^{\lf (\frac{N}{\tilde \t})^{\frac{1}{\s}} \rf-m+k}{|a|\choose a_1,...,a_m}R_1^{a_1}...R_m^{a_m}
\chi_{2^{\s} N},
\end{multline}
where for the second equality we used (\ref{OperatorRacun}) and for the last one we substitute $a_k$ with $a_k-1$.

Therefore, if we set
$$
e_N(x,\xi)=\sum_{k=1}^m \sum_{\mathfrak{S}  = \lf (\frac{N}{\t})^{\frac{1}{\s}} \rf-m+1} ^{ \lf (\frac{N}{\t})^{\frac{1}{\s}} \rf-m+k}{|a|\choose a_1,...,a_m}(R_1^{a_1}...R_m^{a_m}
\chi_{2^\s N})(x,\xi),
$$
then the computation of this subsection gives the equality \eqref{fundamentalEquality},
which in turn implies the fundamental representation \eqref{ocenitiJedn}.

\subsection{Estimates for  $ D^\beta ( R_1^{a_1}...R_m^{a_m}\chi_{ 2^{\s}N  })$} \label{subsecderivatives}
Note that for $N$ large enough we have
$$
(\lf (N/{\tilde \t})^{1/\s}\rf+M)^{\s}
\leq 2^{\s-1}(N/{\tilde \t}+M^{\s})
<2^{\s}N/{\tilde \t}\,
$$
so that for $|\beta|\leq M$ the following estimate holds:
$$
\mathfrak{S}+|\beta|\leq \lf (N/\tilde\t)^{1/\s}\rf+M=\lf (N/\tilde\t)^{1/\s}+M\rf < \lf 2 (N/\tilde\t)^{1/\s}\rf  \,.
$$
Thus, for $x\in K$, $\xi \in \Gamma$, and $\mathfrak{S}\geq \lf (N/\tilde\t)^{1/\s} \rf-m$,
by using (\ref{SimpleInequality}) we obtain
\begin{eqnarray}
\nonumber
|D^{\beta} (R_1^{a_1}...R_m^{a_m}\chi_{ 2^{\s}N  })(x, \xi)|
&\leq& |\xi|^{-\mathfrak{S}} A^{\mathfrak{S}+|\beta|+1}\lf N^{1/\s} \rf^{\mathfrak{S}+|\beta|}\nonumber\\
&\leq & |\xi|^{m-\lf (N/\tilde \t)^{1/\s} \rf} A^{\lf (N/\tilde \t)^{1/\s} \rf+M+1}
\lf N^{1/\s} \rf^{\lf (N/\tilde \t)^{1/\s} \rf+M}\nonumber\\
&\leq& |\xi|^{m-\lf (N/\tilde \t)^{1/\s} \rf} C^{N+1} N^{\frac{\tilde \t^{-1/\s}}{\s}N}\,,
\end{eqnarray}
for some $C>0$, which is, after enumeration
$N\to N+2^{\s-1}\tilde \t( m+ M)^{\s}$ bounded by
\begin{multline} \nonumber
|\xi|^{m-\lf ((N+2^{\s-1}\tilde \t( m+ M)^{\s})/\tilde \t)^{1/\s} \rf}
A^{N+2^{\s-1}\tilde\t( m+ M)^{\s}+1} \\
\times (N+2^{\s-1}\tilde \t( m+ M)^{\s})^{\frac{\tilde \t^{-1/\s}}{\s}(N+2^{\s-1}\tilde \t( m+ M)^{\s})},
\end{multline}
for some $A>0$. Moreover,
\begin{eqnarray}
\label{NejednakostSaN}
\Big(\frac{N+2^{\s-1}\tilde \t( m+ M)^{\s}}{\tilde \t}\Big)^{1/\s}
&\geq& 2^{\frac{1-\s}{\s}}((N/\tilde \t)^{1/\s}+2^{\frac{\s-1}{\s}}(m+M))\nonumber\\
&=&2^{\frac{1-\s}{\s}}(N/\tilde \t)^{1/\s}+m+M\,.
\end{eqnarray}
Finally, \eqref{NejednakostSaN}, $(M.2)'$ property of $N^{\frac{\tilde \t^{-1/\s}}{\s}N}$
and Stirling's formula give the estimate
\begin{equation}
\label{OcenaProizvodaOp}
|D^{\beta} R_1^{a_1}...R_m^{a_m}\chi_{ 2^{\s}N  }(x)|
\leq |\xi|^{-\lf 2^{\frac{1-\s}{\s}}(N/\tilde \t)^{1/\s} \rf-M} C^{N+1} N!^{\frac{\tilde \t^{-1/\s}}{\s}}
\end{equation}
for some $C>0$.

\subsection{The computation of ${\mathcal F}_{x\to \eta}(w_N)(\eta,\xi)$} \label{subsecfurije}
From
$$
(R_1^{a_1}...R_m^{a_m}\chi_{ 2^{\s}N  })(x,\xi)
=
\prod_{j=1}^m p_j^{a_j}(\xi)\sum_{|\alpha|\leq \mathfrak{S}}c_{\alpha}D^{\alpha}\chi_{2^{\s} N}(x)
$$
for suitable constants $c_{\alpha}$,
it follows that
$$
{\mathcal F}_{x\to \eta}(R_1 ^{a_1}...R_m ^{a_m}\chi_{2^{\s} N})(\eta,\xi)
= \prod_{j=1}^m p_j^{a_j}(\xi)\sum_{|\alpha|\leq \mathfrak{S}}c''_{\alpha}{\eta}^{\alpha}
\widehat\chi_{2^{\s} N}(\eta),
$$
so that
\begin{multline}
{\mathcal F}_{x\to \eta}(w_N)(\eta,\xi) \nonumber
\\
=\sum_{\mathfrak{S} =0} ^{\lf (\frac{N}{{\tilde \t}})^{\frac{1}{\s}} \rf-m}{|a|\choose a_1,a_2\dots a_m}\Big(\prod_{j=1}^m p_j^{a_j}(\xi)\Big)\sum_{|\alpha|\leq \mathfrak{S}}c''_{\alpha}{\eta}^{\alpha}\widehat\chi_{2^{\s} N}(\eta).
\end{multline}
Note that the number of terms in ${\mathcal F}_{x\to \eta}(w_N)(\eta,\xi)\,$
is bounded by $C 2^{\lf (N/\t)^{1/\s} \rf}$ for some $C>0$ which does not depend on $N$.

When $|\eta|\leq \varepsilon |\xi|$, $\xi \in \Gamma$,
$ |\xi| >\lf N^{1/\s} \rf$, and $N$ sufficiently large we have
\begin{multline} \nonumber
|{\mathcal F}_{x\to \eta}(w_N)(\eta,\xi)|\leq\\
\sum_{\mathfrak{S} = 0 } ^{\lf (\frac{N}{{\tilde \t}})^{\frac{1}{\s}} \rf-m}
{|a|\choose a_1,a_2\dots a_m}\Big(\prod_{j=1}^m (|p_j(\xi)||\varepsilon\xi|^j)^{a_j}\Big)
\sum_{|\alpha|\leq \mathfrak{S}}c''_{\alpha}|\widehat\chi_{2^{\s} N}(\eta)| \\
\leq A C^{\lf(N/\t)^{1/\s}\rf}|\widehat\chi_{2^{\s} N}(\eta)|,
\end{multline}
for some $A,C>0$, and we used
$$
\prod_{j=1}^m (|p_j(\xi)||\varepsilon\xi|^j)^{a_j}\leq A\varepsilon^{\mathfrak{S}}\leq A,
\quad \xi \in \Gamma, |\xi| > \lf N^{1/\s} \rf,
$$
which follows from $\varepsilon<1$ and the fact that
$\prod_{j=1}^m (|p_j(\xi)||\xi|^j)^{a_j}$ is homogeneous of order zero.


\subsection*{Acknowledgment}
This research is supported by Ministry of Education, Science and
Technological Development of Serbia through the Project no. 174024.
\par

\end{document}